\renewcommand\eqref[1]{(\ref{#1})} 
\numberwithin{equation}{section}
\theoremstyle{plain}
\newtheorem{thm}{Theorem}[section]
\theoremstyle{definition}
\newtheorem{rem}[thm]{Remark}
\newcommand{\Rn}{\mathbb R^{n}}
\def\e[#1]{{\textrm{e}}^{#1}}
\begin{document}

   \title[Hardy, TM and CKN inequalities]
 {Hardy, weighted Trudinger-Moser and Caffarelli-Kohn-Nirenberg type inequalities on Riemannian manifolds with negative curvature}

\author[M. Ruzhansky]{Michael Ruzhansky}
\address{
  Michael Ruzhansky:
  \endgraf
  Department of Mathematics
  \endgraf
  Imperial College London
  \endgraf
  180 Queen's Gate, London SW7 2AZ
  \endgraf
  United Kingdom
  \endgraf
  {\it E-mail address} {\rm m.ruzhansky@imperial.ac.uk}
  }

\author[N. Yessirkegenov]{Nurgissa Yessirkegenov}
\address{
  Nurgissa Yessirkegenov:
  \endgraf
  Institute of Mathematics and Mathematical Modelling
  \endgraf
  125 Pushkin str.
  \endgraf
  050010 Almaty
  \endgraf
  Kazakhstan
  \endgraf
  and
  \endgraf
  Department of Mathematics
  \endgraf
  Imperial College London
  \endgraf
  180 Queen's Gate, London SW7 2AZ
  \endgraf
  United Kingdom
  \endgraf
  {\it E-mail address} {\rm n.yessirkegenov15@imperial.ac.uk}
  }

\thanks{The authors were supported in parts by the EPSRC
 grant EP/R003025/1 and by the Leverhulme Grant RPG-2017-151, as well as by the MESRK grant AP05133271. No new data was collected or
generated during the course of research.}

     \keywords{Trudinger-Moser inequality, Hardy inequality, Caffarelli-Kohn-Nirenberg inequality, uncertainty principle, Riemannian manifold,
     non-positive curvature, hyperbolic space}
     \subjclass[2010]{26D10, 31C12}

     \begin{abstract} In this paper we obtain Hardy, weighted Trudinger-Moser and Caffarel\-li-Kohn-Nirenberg type inequalities with sharp constants on Riemannian manifolds with non-positive sectional curvature and, in particular, a variety of new estimates on hyperbolic spaces. Moreover, in some cases we also show their equivalence with Trudinger-Moser inequalities. As consequences, the relations between the constants of these inequalities are investigated yielding asymptotically best constants in the obtained inequalities. We also obtain the corresponding uncertainty type principles.
     \end{abstract}
     \maketitle

       \tableofcontents

\section{Introduction}
\label{SEC:intro}

Recall the classical Caffarelli-Kohn-Nirenberg inequality \cite{CKN84}:

\begin{thm}\label{clas_CKN}
Let $n\in\mathbb{N}$ and let $p_{1}$, $p_{2}$, $p_{3}$, $a$, $b$, $d$, $\delta\in \mathbb{R}$ be such that $p_{1},p_{2}\geq1$,
$p_{3}>0$, $0\leq\delta\leq1$, and
\begin{equation}\label{clas_CKN0}
\frac{1}{p_{1}}+\frac{a}{n},\, \frac{1}{p_{2}}+\frac{b}{n},\, \frac{1}{p_{3}}+\frac{c}{n}>0,
\end{equation}
where $c=\delta d + (1-\delta) b$. Then there exists a positive constant $C$ such that
\begin{equation}\label{clas_CKN1}
\||x|^{c}f\|_{L^{p_{3}}(\Rn)}\leq C \||x|^{a}|\nabla f|\|^{\delta}_{L^{p_{1}}(\Rn)} \||x|^{b}f\|^{1-\delta}_{L^{p_{2}}(\Rn)}
\end{equation}
holds for all $f\in C_{0}^{\infty}(\Rn)$, if and only if the following conditions hold:
\begin{equation}\label{clas_CKN2}
\frac{1}{p_{3}}+\frac{c}{n}=\delta
\left(\frac{1}{p_{1}}+\frac{a-1}{n}\right)+(1-\delta)\left(\frac{1}{p_{2}}+\frac{b}{n}\right),
\end{equation}
\begin{equation}\label{clas_CKN3}
a-d\geq 0 \quad \textrm{if} \quad \delta>0,
\end{equation}
\begin{equation}\label{clas_CKN4}
a-d\leq 1 \quad \textrm{if} \quad \delta>0 \quad \textrm{and} \quad \frac{1}{p_{3}}+\frac{c}{n}=\frac{1}{p_{1}}+\frac{a-1}{n}.
\end{equation}
\end{thm}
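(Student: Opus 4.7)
The plan is to split the argument into a necessity part, relying entirely on scaling and on testing \eqref{clas_CKN1} against well-chosen one-parameter families, and a sufficiency part, which I would reduce to a pure-gradient weighted inequality and then interpolate via H\"older.

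For the necessity of \eqref{clas_CKN2}, I would substitute the dilated function $f_{\lambda}(x):=f(\lambda x)$ into \eqref{clas_CKN1}: the two sides scale by different powers of $\lambda>0$ unless the exponents balance, and equating them produces \eqref{clas_CKN2}. The sign conditions \eqref{clas_CKN3} and \eqref{clas_CKN4} come from testing against a fixed smooth bump transported into the region $\{|x|\sim\varepsilon\}$ (sending $\varepsilon\to 0$) and $\{|x|\sim R\}$ (sending $R\to\infty$), respectively; \eqref{clas_CKN3} is forced by the behaviour at the origin, while \eqref{clas_CKN4} is forced in the collapsed case where the two scaling exponents on the right of \eqref{clas_CKN1} coincide, leaving only a logarithmic margin. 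The integrability conditions \eqref{clas_CKN0} are what allow these test families to have finite weighted norms in the first place.

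For sufficiency I would first treat the pure-gradient case $\delta=1$,
\[
\bigl\||x|^{d}f\bigr\|_{L^{q}(\Rn)}\leq C\bigl\||x|^{a}|\nabla f|\bigr\|_{L^{p_{1}}(\Rn)},\qquad \tfrac{1}{q}+\tfrac{d}{n}=\tfrac{1}{p_{1}}+\tfrac{a-1}{n},
\]
under $0\leq a-d\leq 1$. Writing $f(x)=-\int_{0}^{\infty}\partial_{r}f(x+r\omega)\,dr$ and averaging over $\omega\in S^{n-1}$ represents $f$ by a Riesz-type potential of $|\nabla f|$ whose kernel behaves like $|x-y|^{1-n}$; the required bound then follows from a Stein--Weiss weighted Hardy--Littlewood--Sobolev inequality with weights $|x|^{d}$ and $|y|^{-a}$, where $a\geq d$ is exactly what prevents a non-integrable singularity of the resulting kernel at the origin and $a-d\leq 1$ prevents a critical behaviour at infinity.

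The general $0<\delta<1$ case then follows by applying H\"older's inequality to the factorisation
\[
|x|^{cp_{3}}|f|^{p_{3}}=\bigl(|x|^{d}|f|\bigr)^{\delta p_{3}}\bigl(|x|^{b}|f|\bigr)^{(1-\delta)p_{3}},
\]
which uses precisely $c=\delta d+(1-\delta)b$, with conjugate exponents $q/(\delta p_{3})$ and $p_{2}/((1-\delta)p_{3})$; the resulting H\"older balance $\tfrac{1}{p_{3}}=\tfrac{\delta}{q}+\tfrac{1-\delta}{p_{2}}$ combined with the scaling of the pure-gradient step reproduces \eqref{clas_CKN2}. The main obstacle I expect is the pure-gradient estimate in the delicate endpoint regime $a-d=1$ covered by \eqref{clas_CKN4}: here the Stein--Weiss argument must be executed by splitting the integration into near-origin, near-infinity and intermediate annuli and handling each piece with its own decay mechanism, since ordinary weighted estimates degenerate exactly at this threshold.
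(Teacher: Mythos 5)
This theorem is not proved in the paper at all: it is the classical Caffarelli--Kohn--Nirenberg result, recalled from \cite{CKN84} as background, so your sketch has to be measured against the original argument, which is a dyadic-annulus decomposition (a Gagliardo--Nirenberg inequality on a fixed annulus, rescaled and summed over dyadic scales, with a separate treatment of the critical case in \eqref{clas_CKN4}), not the reduction you propose.

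The genuine gap is in your sufficiency argument. Your H\"older factorisation forces the intermediate exponent $q$ with $\tfrac1q=\tfrac{1}{p_1}+\tfrac{a-d-1}{n}$, and then requires the pure-gradient inequality $\||x|^{d}f\|_{L^{q}(\Rn)}\le C\||x|^{a}|\nabla f|\|_{L^{p_1}(\Rn)}$. But by the $\delta=1$ case of the very theorem you are proving (where the relation in \eqref{clas_CKN4} is automatic), that inequality holds only when $0\le a-d\le 1$, whereas \eqref{clas_CKN3}--\eqref{clas_CKN4} permit $a-d>1$ whenever one is away from the critical case. Concretely, take $n=3$, $p_1=p_2=2$, $\delta=\tfrac12$, $a=5$, $b=d=0$ (so $c=0$) and $p_3=\tfrac67$: all of \eqref{clas_CKN0}, \eqref{clas_CKN2}, \eqref{clas_CKN3} hold and \eqref{clas_CKN4} is vacuous, so \eqref{clas_CKN1} is asserted by the theorem; yet your route would need $\|f\|_{L^{6/11}(\mathbb{R}^3)}\le C\||x|^{5}\nabla f\|_{L^{2}(\mathbb{R}^3)}$, which is false since there $a-d=5>1$. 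A second failure mode occurs when $p_1>n$: then one can have $\tfrac{1}{p_1}+\tfrac{a-d-1}{n}\le 0$ (e.g.\ $n=2$, $p_1=3$, $a=d=0$, $\delta=\tfrac12$, $p_2=1$, $b=-1$), so there is no Lebesgue space to interpolate through at all, although the parameters are admissible. So the H\"older-plus-Stein--Weiss scheme proves only a proper subrange of the ``if'' direction; the missing idea is precisely the annular decomposition and summation (with the logarithmic bookkeeping in the critical case) that carries the load in \cite{CKN84}.

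The necessity half also needs repair in its details, though it is closer to viable. Scaling $f_\lambda(x)=f(\lambda x)$ does give \eqref{clas_CKN2}. However, a ``fixed smooth bump transported into $\{|x|\sim\varepsilon\}$'' is ill-defined as $\varepsilon\to0$ (a unit-size bump cannot sit there), and a self-similarly shrunk annular bump gives nothing beyond \eqref{clas_CKN2} because both sides scale identically once \eqref{clas_CKN2} holds. What actually forces \eqref{clas_CKN3} is concentration at a point away from the origin (shrinking the support at a fixed centre, or equivalently your fixed bump translated to $|x|\sim R\to\infty$): this yields $\delta(a-d)\ge0$, i.e.\ it is the test you attribute to \eqref{clas_CKN4}. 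The endpoint condition \eqref{clas_CKN4} cannot be detected by any single transported bump, which only sees power-law behaviour; it requires a family spread logarithmically over many dyadic annuli (a truncated $\log$-type profile), which is how the original paper treats the collapsed case.
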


It is a natural problem, also important for applications, to find an analogue of the above Caffarelli-Kohn-Nirenberg inequalities on Lie groups or on Riemannian
manifolds. On Lie groups, we refer, for example, to \cite{ZHD14} for Heisenberg groups, to \cite{Yac18} for Lie groups of polynomial
volume growth, to \cite{RSY17_strat} and to \cite{RS17_strat} for stratified groups, to
\cite{RSY17_hom1}, to \cite{RSY17_hom2} and to \cite{ORS17_hom} for general homogeneous groups. On Riemannian manifolds, in \cite{CX04}
and \cite{Mao15} the authors assuming that Caffarelli-Kohn-Nirenberg type inequalities hold, investigated the geometric property related to the volume of a geodesic ball on an $n$-dimensional $(n\geq3)$ complete open
manifold with non-negative Ricci curvature and on an $n$-dimensional $(n\geq3)$ complete and noncompact smooth metric measure space with
non-negative weighted Ricci curvature, respectively.

Recently, the following Caffarelli-Kohn-Nirenberg type inequalities have been obtained on {\em Cartan-Hadamard manifolds} $M$, that is, complete simply connected manifolds of non-positive sectional curvature, in \cite{Ngu17}:
Let $n\geq2$, $p>1$, $r>0$, $\alpha,\beta,\gamma\in\mathbb{R}$ and $\gamma=(1+\alpha)/r+(p-1)\beta/(pr)$ be such that
\begin{equation}\label{CKN_Nguyen1}
\frac{1}{r}-\frac{\gamma}{n}>0,\;\frac{1}{p}-\frac{\alpha}{n}>0,\;1-\frac{\beta}{n}>0.
\end{equation}
Then we have for all $f\in C_{0}^{\infty}(M)$
\begin{equation}\label{CKN_Nguyen2}
\int_{M}\frac{|f(x)|^{r}}{(\rho(x))^{\gamma r}}dx\leq \frac{r}{n-\gamma r}
\left(\int_{M}\frac{|\partial_{\rho}f(x)|^{p}}{(\rho(x))^{\alpha p}}dx\right)^{\frac{1}{p}}
\left(\int_{M}\frac{|f(x)|^{\frac{p(r-1)}{p-1}}}{(\rho(x))^{\beta}}dx\right)^{\frac{p-1}{p}},
\end{equation}
for $r\neq 1$, where $\partial_{\rho}$ is the radial derivation along geodesic curves, and $\rho(x)={\rm dist}(x,x_{0})$ is the geodesic distance to any fixed point $x_{0}$ of $M$.

Now, on the hyperbolic space $\mathbb{H}^{n}$ with $n\geq2$, let us recall the following another recent result on Caffarelli-Kohn-Nirenberg type inequalities for radially symmetric functions \cite{ST17}: Let $2\leq p\leq \infty$, then there exists a positive constant $c_{r}=c_{r}(n,p)$ such that for all $f\in
W^{1,2}_{0,rad}(\mathbb{H}^{n})$ we have
\begin{equation}\label{ST_CKN1}
\int_{\mathbb{H}^{n}}|\nabla_{g}f|^{2}dV_{g}\geq c_{r}(n,p)\left(\int_{\mathbb{H}^{n}}A_{p}|f|^{p}dV_{g}\right)^{\frac{2}{p}},
\end{equation}
where
$$A_{p}(r)=\frac{(f(r))^{2}(1-r^{2})^{2}}{(G(r))^{\frac{p+2}{2}}},\;1\leq p<\infty,\;A_{\infty}(r)=\frac{1}{\sqrt{G(r)}},$$
and $dV_{g}$ is the volume form (see Section \ref{SEC:prelim}), and $G(r)=\int_{r}^{1}\frac{(1-t^{2})^{n-2}}{t^{n-1}}dt$ (note that $G/(n\omega_{n-1})$ is the fundamental solution of the hyperbolic
Laplacian). Here, $W^{1,2}_{0,rad}(\mathbb{H}^{n})$ is the subspace of radially symmetric functions of $W^{1,2}_{0}(\mathbb{H}^{n})$.
Moreover, $c_{r}(n,2)=1/16$ and $c_{r}(n,\infty)=2^{n-2}\omega_{n-1}$, and $c_{r}(n,p)\leq c_{r}(n,2)(c_{r}(n,\infty))^{p-2}$ with
$2<p<\infty$.

In this paper, we introduce a class of new Caffareli-Kohn-Nirenberg type inequalities with sharp constants on hyperbolic spaces. We also do not assume that any of the functions are radially symmetric. Moreover, our method allows us to show their equivalence with Trudinger-Moser inequalities. Using this method, we actually prove Hardy type inequalities on complete, simply connected Riemannian manifold $M$ with negative curvature, and on hyperbolic space $\mathbb{H}^{n}$ for $n\geq2$ with sharp constants. Furthermore, we show extended versions of weighted Trudinger-Moser inequalities on $\mathbb{H}^{n}$ for $n\geq2$ with sharp constants. We refer to Section \ref{SEC:prelim} for precise definitions. Now, let us briefly state our main results: Let $\omega_{n-1}$ be the area of the surface of the unit $n$-ball. Then we have
\begin{itemize}
\item {\bf (Hardy type inequalities on $M$)} Let $M$ be a complete, simply connected Riemannian manifold of dimension $n\geq2$ with negative curvature. Let $0\leq\beta<n$. Then for any $n\leq q<\infty$ there exists a positive constant $C_{1}=C_{1}(n, \beta, q, M)$ such that
\begin{equation}\label{Hardy_manif1_intro}
\left\|\frac{f}{\rho^{\frac{\beta}{q}}}\right\|_{L^{q}(M)}\leq C_{1}q^{1-1/n}\|f\|_{W^{1,n}(M)}
\end{equation}
holds for all functions $f\in W_{0}^{1,n}(M)$, and such that $\underset{q\rightarrow\infty}{\rm limsup\;}C_{1}(n, \beta, q, M)<\infty$. The asymptotically sharp constant  for \eqref{Hardy_manif1_intro} is given in Remark \ref{rem_B1}. Here, $\rho(x)={\rm dist}(x,x_{0})$ is the geodesic distance to any fixed point $x_{0}$ of $M$. Moreover, the Hardy type inequalities \eqref{Hardy_manif1_intro} with relation \eqref{equiv_identity_Hardy} are equivalent to the weighted Trudinger-Moser inequalities \eqref{Trudinger_manif1} with $0<\alpha<\alpha_{\beta}$.
\item {\bf (Hardy type inequalities on $\mathbb{H}^{n}\;(n\geq2)$)} Let $0\leq\beta<n$. Then for any $n\leq q<\infty$ there exists a positive constant $C_{2}=C_{2}(n, \beta, q)$ such that
\begin{equation}\label{Hardy_hyper1_intro}
\left\|\frac{f}{\rho^{\frac{\beta}{q}}}\right\|_{L^{q}(\mathbb{H}^{n})}\leq C_{2}q^{1-1/n}\|\nabla_{g} f\|_{L^{n}(\mathbb{H}^{n})}
\end{equation}
holds for all functions $f\in W_{0}^{1,n}(\mathbb{H}^{n})$, and such that $\underset{q\rightarrow\infty}{\rm limsup\;}C_{2}(n, \beta, q)<\infty$. The asymptotically sharp constant for \eqref{Hardy_hyper1_intro} in the sense of Remark \ref{rem_B2} is given in Theorem \ref{Hardy_hyper_thm}. Furthermore, the Hardy type
inequalities \eqref{Hardy_hyper1_intro} with relation \eqref{equiv_identity_Hardy_hyper} are equivalent to the weighted Trudinger-Moser inequalities \eqref{Trudinger_hyper1} with $0<\alpha<\alpha_{\beta}$.
\item {\bf (Uncertainty type principle on $\mathbb{H}^{n}\;(n\geq2)$)} Let $0\leq\beta<n$. Then we have
\begin{equation}\label{uncer_1_intro}
\begin{split}
\left(\int_{\mathbb{H}^{n}}|\nabla_{g}
f(x)|^{n}dV_{g}\right)^{1/n}&\left(\int_{\mathbb{H}^{n}}\rho^{q'}|f(x)|^{q'}dV_{g}\right)^{1/q'}\\ &\geq
C_{2}^{-1}q^{1/n-1}\int_{\mathbb{H}^{n}}\rho^{\frac{q-\beta}{q}}|f(x)|^{2}dV_{g}
\end{split}
\end{equation}
holds for all functions $f\in W_{0}^{1,n}(\mathbb{H}^{n})$ and any $n\leq q<\infty$, where $1/q+1/q'=1$, and $C_{2}$ is the constant from
\eqref{Hardy_hyper1_intro}.
\item {\bf (Weighted Trudinger-Moser inequalities on $\mathbb{H}^{n}\;(n\geq2)$ I)} Let $0\leq\beta<n$ and let $0<\alpha<\alpha_{\beta}$ with $\alpha_{\beta}=n\omega_{n-1}^{1/(n-1)}(1-\beta/n)$. Then there exists a positive constant $\widetilde{C_{5}}=\widetilde{C_{5}}(\beta, n, \alpha)$ such that
\begin{equation}\label{Trudinger_GN2_hyper1_analog_intro}
\begin{split}
\int_{\mathbb{H}^{n}}&\frac{1}{(1+|f(x)|)^{n/(n-1)}\rho^{\beta}}\left(\exp(\alpha|f(x)|^{n/(n-1)})\right.\\&\left.-\sum_{k=0}^{n-2}\frac{
\alpha^{k}|f(x)|^{kn/(n-1)}}{k!}\right)dV_{g}\leq \widetilde{C_{5}}\int_{\mathbb{H}^{n}}
\frac{|f(x)|^{n}}{\rho^{\beta}}dV_{g}
\end{split}
\end{equation}
holds for all functions $f\in
W_{0}^{1,n}(\mathbb{H}^{n})$ with $\|\nabla_{g} f\|_{L^{n}(\mathbb{H}^{n})}\leq1$. Moreover, the power $n/(n-1)$ in the denominator is sharp.
\item {\bf (Weighted Trudinger-Moser inequalities on $\mathbb{H}^{n}\;(n\geq2)$ II)}
Let $0\leq \beta_{1}<n$ and $\beta_{2}\in \mathbb{R}$. Let $0<\alpha<\alpha_{\beta_{1}}$ with $\alpha_{\beta_{1}}=n\omega_{n-1}^{1/(n-1)}(1-\beta_{1}/n)$. Let $\delta$ be as in \eqref{delta}. Then there exists a positive constant
$\widetilde{C_{3}}=\widetilde{C_{3}}(n, \alpha, \beta_{1}, \beta_{2},\delta)$ such that
\begin{equation}\label{analog_LT13_1_intro}
\begin{split}
\int_{\mathbb{H}^{n}}\frac{1}{\rho^{\beta_{1}}}\left(\exp(\alpha|f(x)|^{n/(n-1)})-\sum_{k=0}^{n-2}\right.&\left.\frac{
\alpha^{k}|f(x)|^{kn/(n-1)}}{k!}\right)dV_{g} \\&\leq \widetilde{C_{3}}\left(\int_{\mathbb{H}^{n}}
\frac{|f(x)|^{n}}{\rho^{\beta_{2}}}dV_{g}\right)^{1-\delta}
\end{split}
\end{equation}
holds for all functions $f\in W_{0}^{1,n}(\mathbb{H}^{n})$ with $\|\nabla_{g}
f\|_{L^{n}(\mathbb{H}^{n})}\leq1$. Moreover, the constant $\alpha_{\beta_{1}}$ is sharp.
\item {\bf (Caffareli-Kohn-Nirenberg inequalities on $\mathbb{H}^{n}\;(n\geq2)$ I) } Let $b$, $c\in\mathbb{R}$, $0<p_{3}<\infty$ and $1<p_{2}<\infty$. Let $\delta\in(0,1]\cap\left(\frac{p_{3}-p_{2}}{p_{3}},1\right]$. Let $0\leq b(1-\delta)-c<n(1/p_{3}-(1-\delta)/p_{2})$ and $n\leq\frac{\delta
p_{2}p_{3}}{p_{2}-(1-\delta)p_{3}}$. Then we have
\begin{equation}\label{CKN_1_intro}
\|\rho^{c}f\|_{L^{p_{3}}(\mathbb{H}^{n})}
\leq \widehat{C_{3}}\|\nabla_{g} f\|^{\delta}_{L^{n}(\mathbb{H}^{n})}
\|\rho^{b}f\|^{1-\delta}_{L^{p_{2}}(\mathbb{H}^{n})}
\end{equation}
for all functions $f\in W_{0}^{1,n}(\mathbb{H}^{n})$, where $\widehat{C_{3}}=\widehat{C_{3}}(p_{2},p_{3},b,c,n,\delta)$ is given in Theorem \ref{CKN_thm}.
\item {\bf (Caffareli-Kohn-Nirenberg inequalities on $\mathbb{H}^{n}\;(n\geq2)$ II) } Let $0\leq\beta_{1}<n$ and $\beta_{2}\in\mathbb{R}$. Let $\delta$ be as in \eqref{delta}. Then for any $n\leq q<\infty$ there exists a positive constant $C_{3}=C_{3}(n, \beta_{1}, \beta_{2}, q, \delta)$ such that
\begin{equation}\label{GN_hyper1_intro}
\left\|\frac{f}{\rho^{\frac{\beta_{1}}{q}}}\right\|_{L^{q}(\mathbb{H}^{n})}\leq C_{3}q^{1-1/n}\|\nabla_{g}
f\|_{L^{n}(\mathbb{H}^{n})}^{1-\frac{n(1-\delta)}{q}}
\left\|\frac{f}{\rho^{\frac{\beta_{2}}{n}}}\right\|_{L^{n}(\mathbb{H}^{n})}^{\frac{n(1-\delta)}{q}}
\end{equation}
holds for all functions $f\in W_{0}^{1,n}(\mathbb{H}^{n})$, and such that $\underset{q\rightarrow\infty}{\rm limsup\;}C_{3}(n, \beta_{1}, \beta_{2}, q, \delta)<\infty$. The asymptotically sharp constant for \eqref{GN_hyper1_intro} in the sense of Remark \ref{rem_B3} is given in Theorem \ref{GN_hyper_thm}. Furthermore, the Caffarelli-Kohn-Nirenberg type inequalities \eqref{GN_hyper1_intro} with relation \eqref{equiv_identity_GN_hyper} are equivalent to the weighted Trudinger-Moser inequalities \eqref{analog_LT13_1_intro}.
\item {\bf (Caffareli-Kohn-Nirenberg inequalities on $\mathbb{H}^{n}\;(n\geq2)$ III) } Let $0\leq\beta<n$. Then for any $n\leq q<\infty$ there exists a positive constant $C_{5}=C_{5}(n, \beta, q)$ such that
\begin{equation}\label{GN2_hyper1_intro}
\left\|\frac{f}{\rho^{\frac{\beta}{q}}(1+|f|)^{\frac{n'}{q}}
}\right\|_{L^{q}(\mathbb{H}^{n})}\leq C_{5}q^{1-1/n} \|\nabla_{g} f\|_{L^{n}(\mathbb{H}^{n})}^{1-n/q}
\left\|\frac{f}{\rho^{\frac{\beta}{n}}}\right\|_{L^{n}(\mathbb{H}^{n})}^{n/q}
\end{equation}
holds for all functions $f\in W_{0}^{1,n}(\mathbb{H}^{n})$, and such that $\underset{q\rightarrow\infty}{\rm limsup\;}C_{5}(n, \beta, q)<\infty$. The asymptotically sharp constant for \eqref{GN2_hyper1_intro} in the sense of Remark \ref{rem_LT16_2} is given in Theorem \ref{GN3_hyper_thm}. Moreover, the Caffarelli-Kohn-Nirenberg type inequalities \eqref{GN2_hyper1_intro} with relation \eqref{equiv_identity_GN3_hyper} are equivalent to the weighted Trudin\-ger-Moser inequalities \eqref{Trudinger_GN2_hyper1_analog_intro}.

\end{itemize}

We note that the obtained Caffarelli-Kohn-Nirenberg type inequalities are not covered by \eqref{CKN_Nguyen2} and \eqref{ST_CKN1}. For example, the obtained inequality \eqref{GN_hyper1_intro}, after the change of variables $1-n/q=n/t$ for $q>n$, has the following form \begin{multline}\label{GN_hyper1_intro22}
\int_{\mathbb{H}^{n}}\frac{|f(x)|^{\frac{tn}{t-n}}}{\rho^{\beta_{1}}}dV_{g}\\ \leq C_{3}^{\frac{tn}{t-n}}\left(\frac{tn}{t-n}\right)^{\frac{t(n-1)}{t-n}}
\left(\int_{\mathbb{H}^{n}}|\nabla_{g}f(x)|^{n}dV_{g}\right)^{\frac{t}{t-n}-(1-\delta)}
\left(\int_{\mathbb{H}^{n}}\frac{|f(x)|^{n}}{\rho^{\beta_{2}}}dV_{g}\right)^{1-\delta},
\end{multline}
and holds for all functions $f\in W_{0}^{1,n}(\mathbb{H}^{n})$. Moreover, the constant $B_{3}^{\frac{tn}{t-n}}$ is asymptotically sharp for \eqref{GN_hyper1_intro22} in the sense of Remark \ref{rem_B3}, where $B_{3}$ is given in Theorem \ref{GN_hyper_thm}. Here, we see that \eqref{GN_hyper1_intro22} is not covered by \eqref{CKN_Nguyen2}, actually being completely different from \eqref{CKN_Nguyen2} in terms of parameters. We also note that \eqref{CKN_1_intro} gives different inequalities than \eqref{CKN_Nguyen2}. Indeed, for example when $p_{2}=p_{3}=n$ if we take $1+b\leq 0$ or $1+c\leq 0$ in \eqref{CKN_1_intro}, then the condition \eqref{CKN_Nguyen1} fails:
$$\frac{1}{r}-\frac{\gamma}{n}=\frac{1}{p_{3}}+\frac{p_{3}c}{rn}=\frac{1+c}{n}\leq0\;\;\text{or}\;\;1-\frac{\beta}{n}=1+\frac{bp_{2}}{n}
=1+b\leq0.$$

We also note that the obtained weighted Trudinger-Moser inequalities \eqref{Trudinger_GN2_hyper1_analog_intro} and \eqref{analog_LT13_1_intro} generalise the known results in \cite[Theorem 1]{LC17} and \cite[Theorem 1.3]{LT13}, respectively.

Of course there exist a variety of different functional and other inequalities on hyperbolic spaces. For example we can refer to \cite{RS16_BMS} for some spectral and isoperimetric inequalities for different classes of integral operators on $\mathbb{H}^{n}$, as well as to other works referred to in this paper.

This paper is organised as follows. In Section \ref{SEC:prelim} we briefly recall the main concepts of Riemannian manifolds with negative curvature and hyperbolic spaces. The Hardy type inequalities with sharp constants on $M$ and $\mathbb{H}^{n}$ are discussed in Section \ref{SEC:Hardy_manif} and in Section \ref{SEC:Hardy_hyper}, respectively. In Section \ref{SEC:CKN} we introduce Caffarelli-Kohn-Nirenberg type and weighted Trudinger-Moser inequalities with sharp constants.

\section{Preliminaries}
\label{SEC:prelim} In this section we briefly review some main concepts of Riemannian manifolds with negative curvature and refer to
\cite{GHL04}, \cite{Li93} and \cite{SY94} for more detailed information.

Let $M$ be an $n$-dimensional complete Riemannian manifold with the Riemannian metric
$$ds^{2}=\sum g_{ij}dx^{i}dx^{j}$$ for the local coordinate system $\{x^{i}\}_{1\leq i\leq n}$, where $g=\det(g_{ij})$. Let $dV_{g}$ be the volume form associated to the metric $g$, and $\nabla_{g} f$ is the gradient with respect to the metric $g$. Let $K$ be the sectional
curvature on $M$. We say that $M$ has {\em negative curvature}, if $K\leq 0$ along every plane section at every point of $M$. Moreover, $M$
contains no points conjugate to any point $x_{0}$ of $M$. If $M$ is simply connected, then the exponential mapping
$$\exp_{x_{0}}:T_{x_{0}}M\rightarrow M$$
is a diffeomorphism, where $T_{x_{0}}M$ is the tangent space to $M$ at a point $x_{0}$.

We will work on complete, simply connected Riemannian manifold with negative curvature. Let $x_{0}\in M$. Then, $\rho(x)={\rm dist}(x,x_{0})$ is
smooth on $M\backslash\{x_{0}\}$, and satisfies the condition
$$|\nabla_{g} \rho(x)|=1,\;\;x\in M\backslash\{x_{0}\},$$
where ${\rm dist}(\cdot,\cdot)$ is the geodesic distance.

In particular, we will also work on the Poincar\'{e} ball model (coordinate map) of the hyperbolic space $\mathbb{H}^{n}$ ($n\geq2$), that
is, when $M$ has constant curvature equal to $-1$. This is the unit ball $B$ in $\Rn$ centered at the origin and equipped with the
Riemannian metric
$$ds^{2}=\frac{4\sum_{i=1}^{n}dx_{i}^{2}}{(1-|x|^{2})^{2}},$$
where $|\cdot|$ is the Euclidean distance.

The Riemannian measure, the gradient and the hyperbolic distance in the Poincar\'{e} ball model are, respectively,
$$dV_{g}=\frac{2^{n}}{(1-|x|^{2})^{n}}dx,$$
$$\nabla_{g}=\left(\frac{1-|x|^{2}}{2}\right)^{2}\nabla,$$
and
$$\rho(x)=\ln\frac{1+|x|}{1-|x|},$$
where $\nabla$ is the usual gradient, and $dx$ is the Lebesgue measure in $\Rn$.

We also use the polar coordinate change formula
\begin{equation}\label{polar}
\int_{\mathbb{H}^{n}}fdV_{g}=\int_{0}^{+\infty}\int_{\mathbb{S}^{n-1}}f\cdot (\sinh \rho)^{n-1}d\rho d\sigma
\end{equation}
for $f\in L^{1}(\mathbb{H}^{n})$, where $\mathbb{S}^{n-1}$ is the unit sphere in $\mathbb{H}^{n}$.

The Sobolev space $W_{0}^{1,n}(M)$ is defined as the completion of $C_{0}^{\infty}(M)$ in the norm
$$\|f\|_{W^{1,n}(M)}=\left(\int_{M}(|\nabla_{g}f(x)|^{n}+|f(x)|^{n})dV_{g}\right)^{1/n}.$$

\section{Hardy type inequalities on manifolds}
\label{SEC:Hardy_manif}
In this section we prove a family of Hardy type inequalities on complete, simply connected Riemannian manifold $M$ with negative curvature.

Let us first recall the Moser-Trudinger inequality on $M$:
\begin{thm}[{\cite[Theorem 1.3]{DY16}}]
\label{Trudinger_manif_thm}
Let $M$ be a complete, simply connected Riemannian manifold of dimension $n\geq2$ with negative curvature. Let $0\leq\beta<n$ and let $0<\alpha\leq \alpha_{\beta}$ with $\alpha_{\beta}=n\omega_{n-1}^{1/(n-1)}(1-\beta/n)$. Then there exists a positive constant $\widetilde{C_{1}}=\widetilde{C_{1}}(\alpha, \beta,n,M)$ such that
\begin{equation}\label{Trudinger_manif1}
\int_{M}\frac{1}{\rho^{\beta}}\left(\exp(\alpha|f(x)|^{n/(n-1)})-\sum_{k=0}^{n-2}\frac{\alpha^{k}
|f(x)|^{kn/(n-1)}}{k!}\right)dV_{g}\leq \widetilde{C_{1}}
\end{equation}
holds for all functions $f\in W_{0}^{1,n}(M)$ with $\|f\|_{W^{1,n}(M)}\leq1$, where $\omega_{n-1}$ is the area of the surface of the unit $n$-ball in $M$. Moreover, the constant $\alpha_{\beta}$ is sharp.
\end{thm}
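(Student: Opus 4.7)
The plan is to prove Theorem~\ref{Trudinger_manif_thm} by a symmetrization argument that reduces the problem to the sharp Euclidean weighted Moser-Trudinger inequality of Adimurthi-Sandeep. Since $M$ is simply connected with non-positive sectional curvature, $M$ is Cartan-Hadamard, so the exponential map at $x_0$ is a global diffeomorphism and the Cartan-Hadamard comparison theorem gives the lower bound $|\partial B_r^M(x_0)|_g \geq \omega_{n-1} r^{n-1}$ on geodesic sphere areas. Let $f^{*}\colon \mathbb{R}^n\to [0,\infty)$ denote the equimeasurable decreasing rearrangement of $|f|$ with respect to $dV_g$, viewed as a radial function on Euclidean space. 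The sphere-area comparison, inserted into the coarea formula and combined with the convexity of $t\mapsto t^n$, yields a Polya-Szego inequality
\begin{equation*}
\int_M |\nabla_g f|^{n}\, dV_g \;\geq\; \int_{\mathbb{R}^n} |\nabla f^{*}|^{n}\, dx,
\end{equation*}
while, because $\rho^{-\beta}$ is radial and non-increasing, a Hardy-Littlewood rearrangement argument gives, for any monotone nonnegative $\Phi$,
\begin{equation*}
\int_M \rho^{-\beta}\, \Phi(|f|)\, dV_g \;\leq\; \int_{\mathbb{R}^n} |x|^{-\beta}\, \Phi(f^{*})\, dx.
\end{equation*}

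With $\Phi(t)=\exp(\alpha t^{n/(n-1)})-\sum_{k=0}^{n-2}\alpha^{k}t^{kn/(n-1)}/k!$, these two estimates reduce the theorem to the weighted Euclidean inequality
\begin{equation*}
\int_{\mathbb{R}^n}\frac{1}{|x|^{\beta}}\Phi(|u|)\, dx \;\leq\; C
\end{equation*}
for all $u\in W^{1,n}(\mathbb{R}^n)$ with $\int|\nabla u|^{n}\leq 1$, which holds with sharp exponent exactly when $0<\alpha\leq \alpha_\beta = n\omega_{n-1}^{1/(n-1)}(1-\beta/n)$. That inequality is itself proved by a further radial rearrangement in $\mathbb{R}^n$ and the substitution $u(r)=\varphi(-\log r)$, which turns the radial $n$-Dirichlet energy into $\int_0^\infty|\varphi'|^{n}dt$; the weight $r^{-\beta}$ then contributes an effective dimension $n-\beta$ after polar integration, which is precisely how the factor $1-\beta/n$ appears in $\alpha_\beta$. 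Note that the hypothesis $\|f\|_{W^{1,n}(M)}\leq 1$ is stronger than $\|\nabla_g f\|_{L^n(M)}\leq 1$, which is all the Polya-Szego step needs, so no further work is required at this point.

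For the sharpness of $\alpha_\beta$ I would test \eqref{Trudinger_manif1} against the classical Moser sequence. Working in normal coordinates around $x_0$, set
\begin{equation*}
f_k(x) = \omega_{n-1}^{-1/n}\min\!\left\{(\log k)^{(n-1)/n},\; \frac{\log(1/\rho(x))}{(\log k)^{1/n}}\right\}_{+},\qquad \rho(x)\leq 1,
\end{equation*}
extended by zero. Using \eqref{polar} and the asymptotic $\sinh\rho\sim \rho$ near $\rho=0$, one verifies $\|f_k\|_{W^{1,n}(M)}=1+o(1)$, while on $\{\rho\leq 1/k\}$ the integrand $\rho^{-\beta}\exp(\alpha f_k^{n/(n-1)})$ behaves like $\rho^{-\beta}k^{\alpha/\omega_{n-1}^{1/(n-1)}}$, whose polar integral against $\rho^{n-1}d\rho$ diverges as $k\to\infty$ precisely when $\alpha>\alpha_\beta$. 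The main obstacle will be making the Polya-Szego step rigorous with $\mathbb{R}^n$ rather than $\mathbb{H}^n$ as the target: negative curvature alone supplies the Cartan-Hadamard (Euclidean) sphere-area comparison that produces the constant $\omega_{n-1}^{1/(n-1)}$, and any attempt to instead symmetrize onto the model space $\mathbb{H}^n$ would yield a strictly worse exponent. The compatibility with the radial weight $\rho^{-\beta}$ is then handled via monotonicity of that weight and the Hardy-Littlewood inequality, which is the essential technical point.
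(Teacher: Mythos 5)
First, note that the paper does not prove this statement at all: it is quoted verbatim from \cite{DY16} (Theorem 1.3 there), so there is no internal proof to compare with; your symmetrization route therefore has to stand on its own, and as written it does not. The first breakdown is the P\'olya--Szeg\H{o} step. The Cartan--Hadamard comparison you invoke controls the area of geodesic spheres centred at $x_{0}$, but the coarea formula involves the level sets $\{|f|=t\}$, which are arbitrary hypersurfaces, not geodesic spheres; to rearrange onto $\mathbb{R}^{n}$ you need the Euclidean isoperimetric inequality for \emph{arbitrary} domains in $M$, i.e.\ the Cartan--Hadamard conjecture, which is a deep result known only in low dimensions and is certainly not a consequence of ``sphere-area comparison $+$ coarea $+$ convexity''. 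The second breakdown is the weighted Hardy--Littlewood step: rearrangement with respect to $dV_{g}$ matches sets of equal volume, and since $V_{g}(B_{s}(x_{0}))$ is \emph{at least} the Euclidean volume (G\"unther), the transplanted weight dominates $|x|^{-\beta}$ rather than being dominated by it, so your inequality $\int_{M}\rho^{-\beta}\Phi(|f|)\,dV_{g}\leq\int_{\mathbb{R}^{n}}|x|^{-\beta}\Phi(f^{*})\,dx$ points the wrong way. It is in fact false: on $\mathbb{H}^{n}$ (allowed here, $K\equiv-1\leq0$) take $f$ a smooth cutoff equal to $1$ on the geodesic ball $B_{R}$; the left side grows like $e^{(n-1)R}R^{-\beta}$ while the right side grows only like $e^{(n-1)R(1-\beta/n)}$, so their ratio blows up for any $\beta>0$.

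Even granting both steps, the Euclidean inequality you reduce to is itself false: a uniform bound on $\int_{\mathbb{R}^{n}}|x|^{-\beta}\Phi(|u|)\,dx$ for all $u$ with only $\int_{\mathbb{R}^{n}}|\nabla u|^{n}dx\leq1$ cannot hold, since $u_{\lambda}(x)=u(\lambda x)$ leaves the $n$-Dirichlet energy unchanged while $\int|x|^{-\beta}\Phi(|u_{\lambda}|)\,dx=\lambda^{\beta-n}\int|y|^{-\beta}\Phi(|u|)\,dy\to\infty$ as $\lambda\to0^{+}$ (here $\beta<n$). This is exactly why \eqref{Trudinger_manif1} is stated under the full norm $\|f\|_{W^{1,n}(M)}\leq1$; your remark that ``no further work is required'' discards the $L^{n}$ information that any correct proof must use --- either by keeping $\|f^{*}\|_{L^{n}(\mathbb{R}^{n})}=\|f\|_{L^{n}(M)}$ through the rearrangement and quoting a full-norm (Adachi--Tanaka/Adimurthi--Yang type) singular Moser--Trudinger inequality on $\mathbb{R}^{n}$, which at the endpoint $\alpha=\alpha_{\beta}$ is a genuinely different and harder statement than the Dirichlet-norm Adimurthi--Sandeep inequality on bounded domains, or by a direct splitting into $\{|f|\leq1\}$ (where the truncated exponential is comparable to $|f|^{n}\rho^{-\beta}$) and $\{|f|>1\}$ (whose measure is controlled by $\|f\|_{L^{n}}^{n}$). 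Your sharpness sketch is also too coarse on a general $M$: with Moser functions supported in the fixed unit ball the volume density on $B_{1}(x_{0})$ is only bounded, not close to $1$, which degrades the normalization $\|f_{k}\|_{W^{1,n}(M)}$ and hence the exponent you reach; one should concentrate on balls of radii $\delta_{k}\to0$ so that $dV_{g}=(1+o(1))\,dx$ on the support, as in the concentration argument used in the paper for Theorem \ref{analog_LT13}.
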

Now we give our result on the Hardy inequalities, and on their equivalence with \eqref{Trudinger_manif1} when $0<\alpha<\alpha_{\beta}$.
\begin{thm}\label{Hardy_manif_thm}
Let $M$ be a complete, simply connected Riemannian manifold of dimension $n\geq2$ with negative curvature. Let $0\leq\beta<n$. Then for any $n\leq q< \infty$ there exists a positive constant $C_{1}=C_{1}(n,\beta, q, M)$ such that
\begin{equation}\label{Hardy_manif1}
\left\|\frac{f}{\rho^{\frac{\beta}{q}}}\right\|_{L^{q}(M)}\leq C_{1}q^{1-1/n}\|f\|_{W^{1,n}(M)}
\end{equation}
holds for all functions $f\in W_{0}^{1,n}(M)$. Moreover, we have
\begin{equation}\label{equiv_identity_Hardy}
\frac{1}{\alpha_{\beta} n'e}=A_{1}^{n'}=B_{1}^{n'},
\end{equation}
where
$$\alpha_{\beta}=n\omega_{n-1}^{1/(n-1)}(1-\beta/n),$$
\begin{equation*}
\begin{split}
A_{1}=\inf\{C_{1}>0; \exists r=r(n,&\beta, C_{1}) \textrm{ with } r\geq n:\\& \eqref{Hardy_manif1}\textrm{ holds }\forall f\in
W_{0}^{1,n}(M),
\forall q
\textrm{ with } r\leq q<\infty\},
\end{split}
\end{equation*}
\begin{equation}\label{alphaAB_Hardy}
B_{1}=\limsup_{q\rightarrow \infty}\sup_{f\in W_{0}^{1,n}(M)\backslash\{0\}}
\frac{\left\|\frac{f}{\rho^{\frac{\beta}{q}}}\right\|_{L^{q}(M)}}{q^{1-1/n}\|f\|_{W^{1,n}(M)}}.
\end{equation}
The weighted Trudinger-Moser inequalities \eqref{Trudinger_manif1} with $0<\alpha<\alpha_{\beta}$ are equivalent to the Hardy type inequalities \eqref{Hardy_manif1} with relation \eqref{equiv_identity_Hardy}.
\end{thm}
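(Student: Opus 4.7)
The plan is to deduce \eqref{Hardy_manif1} from the weighted Trudinger-Moser inequality \eqref{Trudinger_manif1} of Theorem \ref{Trudinger_manif_thm} by Taylor-expanding the exponential, and then to reverse the argument so as to pin down the sharp constant by using that $\alpha_\beta$ is already known to be optimal in \eqref{Trudinger_manif1}.

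\textbf{From Trudinger-Moser to Hardy.} Fix $0<\alpha<\alpha_\beta$ and take $f\in W_0^{1,n}(M)$ with $\|f\|_{W^{1,n}(M)}\le 1$. Expanding the exponential in \eqref{Trudinger_manif1} as a power series and retaining only the $k$-th term (for integer $k\ge n-1$) gives
\[
\int_M \frac{|f|^{kn/(n-1)}}{\rho^{\beta}}\,dV_g \;\le\; \frac{k!\,\widetilde{C_1}}{\alpha^{k}}.
\]
Setting $q=kn/(n-1)\ge n$, taking a $1/q$-th root, and applying Stirling's formula $(k!)^{1/k}=(k/e)(1+o(1))$ yields \eqref{Hardy_manif1} along this arithmetic progression with a constant converging to $(1/(\alpha n' e))^{1/n'}$ as $k\to\infty$. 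For arbitrary real $q\ge n$, log-convexity of $p\mapsto \|g\|_{L^p(\mu)}$ with $d\mu=\rho^{-\beta}dV_g$ interpolates between two consecutive integer exponents and preserves the asymptotic behaviour. Sending $\alpha\uparrow\alpha_\beta$ gives $A_1^{n'}\le 1/(\alpha_\beta n' e)$.

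\textbf{From Hardy to Trudinger-Moser.} Conversely, for any admissible $C_1$ in \eqref{Hardy_manif1} and any $f$ with $\|f\|_{W^{1,n}(M)}\le 1$, raising \eqref{Hardy_manif1} to the $q$-th power at $q=kn/(n-1)$ yields
\[
\int_M \frac{|f|^{kn/(n-1)}}{\rho^{\beta}}\,dV_g \;\le\; (C_1^{n'})^{k}\Bigl(\tfrac{kn}{n-1}\Bigr)^{k}.
\]
Multiplying by $\alpha^k/k!$, summing over $k\ge n-1$, and using the lower Stirling bound $k!\ge (k/e)^k$ telescopes the series into the geometric series $\sum_{k} (\alpha n' e\, C_1^{n'})^{k}$, which converges precisely when $\alpha<1/(n'e C_1^{n'})$. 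This reproduces \eqref{Trudinger_manif1} for every such $\alpha$; the sharpness of $\alpha_\beta$ in Theorem \ref{Trudinger_manif_thm} then forces $1/(n' e A_1^{n'})\le \alpha_\beta$, i.e.\ $A_1^{n'}\ge 1/(\alpha_\beta n' e)$. Combined with the previous paragraph, $A_1^{n'}=1/(\alpha_\beta n' e)$, and the two families are equivalent. The equality $A_1=B_1$ is immediate from the definitions: any $C_1$ admissible for $A_1$ dominates the $f$-supremum defining $B_1$ for all $q\ge r$, so $B_1\le A_1$; and any $C_1>B_1$ eventually dominates the $f$-supremum, hence is admissible for $A_1$, so $A_1\le B_1$.

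\textbf{Main obstacle.} The only delicate step is the passage from the discrete sequence $q=kn/(n-1)$ produced by the Taylor expansion to arbitrary real $q\ge n$ while keeping the asymptotic constant $(1/(\alpha_\beta n'e))^{1/n'}$ sharp; this is also the structural reason why the sharp constant is only asymptotic and lives in $B_1=\limsup_{q\to\infty}$ rather than being attained at each $q$. Everything else reduces to careful Stirling bookkeeping and an application of the sharpness of $\alpha_\beta$ from Theorem \ref{Trudinger_manif_thm}.
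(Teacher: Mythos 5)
Your proposal is correct and follows essentially the same route as the paper: extract the $k$-th Taylor term of the Trudinger--Moser inequality to get the Hardy bound at $q=kn'$, interpolate by H\"older (log-convexity in the measure $\rho^{-\beta}dV_g$) to all real $q\geq n$, use Stirling for the asymptotic constant, and conversely sum the Hardy inequalities against $\alpha^{k}/k!$ and invoke the sharpness of $\alpha_{\beta}$ in Theorem \ref{Trudinger_manif_thm}. The only cosmetic difference is that you prove $A_{1}=B_{1}$ directly from the definitions, whereas the paper obtains it by sandwiching both constants between the two inequalities involving $\alpha_{\beta}$; both are valid.
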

\begin{rem}\label{rem_B1} By \eqref{equiv_identity_Hardy} and \eqref{alphaAB_Hardy}, we see that the constant
$$B_{1}=(n\omega_{n-1}^{1/(n-1)}(1-\beta/n)n'e)^{-1/n'}$$
is asymptotically sharp for \eqref{Hardy_manif1}, i.e. \eqref{Hardy_manif1} does not hold for $0<C_{1}<B_{1}$.

In fact, \eqref{Hardy_manif1} implies \eqref{Trudinger_manif1} for $0<\alpha<\widehat{\alpha}$ for some $\widehat{\alpha}>0$, while \eqref{Hardy_manif1} and \eqref{equiv_identity_Hardy} together imply \eqref{Trudinger_manif1} for all $0<\alpha<\alpha_{\beta}$. The same remark applies to Theorems \ref{Hardy_hyper_thm}, \ref{GN_hyper_thm} and \ref{GN3_hyper_thm}.
\end{rem}
\begin{proof}[Proof of Theorem \ref{Hardy_manif_thm}] Since $B_{1}\leq A_{1}$, in order to obtain \eqref{equiv_identity_Hardy} it is enough to show that \eqref{Trudinger_manif1}$\Rightarrow$\eqref{Hardy_manif1} with $\alpha_{\beta}\leq(en'A_{1}^{n'})^{-1}$ and
\eqref{Hardy_manif1}$\Rightarrow$\eqref{Trudinger_manif1} with $1/\alpha_{\beta}\leq n'eB_{1}^{n'}$. Let us start to prove \eqref{Trudinger_manif1}$\Rightarrow$\eqref{Hardy_manif1} with $\alpha_{\beta}\leq(en'A_{1}^{n'})^{-1}$. In the case $\|f\|_{W^{1,n}(M)}=0$ taking into account the definition of $f\in W_{0}^{1,n}(M)$ we have $f\equiv0$, that is, \eqref{Hardy_manif1} is trivial. Therefore, we can assume that $\|f\|_{W^{1,n}(M)}\neq0$. Replacing $f$ by $f/\|f\|_{W^{1,n}(M)}$ in \eqref{Trudinger_manif1} with $0<\alpha<\alpha_{\beta}$ we get
\begin{equation}\label{Hardy_manif3_0}
\int_{M}\frac{1}{\rho^{\beta}}\sum_{k=n-1}^{\infty}
\frac{\alpha^{k}|f(x)|^{kn'}}{k!\|f\|_{W^{1,n}(M)}^{kn'}}dV_{g}\leq \widetilde{C_{1}}.
\end{equation}
It implies that for any $\varepsilon$ with $0<\varepsilon<\alpha_{\beta}$ there exists $C_{\varepsilon}$ such that
\begin{equation}\label{Hardy_manif3}
\int_{M}\frac{1}{\rho^{\beta}}\sum_{k=n-1}^{\infty}
\frac{(\alpha_{\beta}-\varepsilon)^{k}|f(x)|^{kn'}}{k!\|f\|_{W^{1,n}(M)}^{kn'}}dV_{g}\leq C_{\varepsilon}.
\end{equation}
In particular, it follows that
\begin{equation}\label{Hardy_manif4}
\left\|\frac{f}{\rho^{\frac{\beta}{kn'}}}\right\|_{L^{kn'}(M)}\leq (C_{\varepsilon}k!)^{1/(kn')}
(\alpha_{\beta}-\varepsilon)^{-1/n'}\|f\|_{W^{1,n}(M)}
\end{equation}
for all $k\geq n-1$. Moreover, for any $q\geq n$, there exists an integer $k\geq n-1$ satisfying $n'k\leq q <n'(k+1)$. Then, using H\"{o}lder's inequality for $\frac{\theta q}{n'k}+\frac{(1-\theta)
q}{n'(k+1)}=1$ with $0<\theta\leq1$ we calculate
\begin{equation*}
\begin{split}
\int_{M}\frac{|f(x)|^{q}}{\rho^{\beta}}dV_{g}&=\int_{M}\frac{|f(x)|^{\theta q}}{\rho^{\frac{\beta\theta
q}{n'k}}}\cdot\frac{|f(x)|^{(1-\theta)q}}{\rho^{\frac{\beta(1-\theta) q}{n'(k+1)}}}dV_{g}\\&
\leq \left(\int_{M}\frac{|f(x)|^{n'k}}{\rho^{\beta}}dV_{g}\right)^{\frac{\theta q}{n'k}}
\left(\int_{M}\frac{|f(x)|^{n'(k+1)}}{\rho^{\beta}}dV_{g}\right)^{\frac{(1-\theta) q}{n'(k+1)}}\\&
=\left\|\frac{f}{\rho^{\frac{\beta}{n'k}}}\right\|_{L^{n'k}(M)}^{\theta q}
\left\|\frac{f}{\rho^{\frac{\beta}{n'(k+1)}}}\right\|_{L^{n'(k+1)}(M)}^{(1-\theta)q},
\end{split}
\end{equation*}
that is,
\begin{equation}\label{Hardy_manif5}
\left\|\frac{f}{\rho^{\frac{\beta}{q}}}\right\|_{L^{q}(M)}
\leq\left\|\frac{f}{\rho^{\frac{\beta}{n'k}}}\right\|_{L^{n'k}(M)}^{\theta}
\left\|\frac{f}{\rho^{\frac{\beta}{n'(k+1)}}}\right\|_{L^{n'(k+1)}(M)}^{1-\theta}.
\end{equation}
Combining this with \eqref{Hardy_manif4}, we obtain
\begin{equation}\label{Hardy_manif6}
\left\|\frac{f}{\rho^{\frac{\beta}{q}}}\right\|_{L^{q}(M)}\leq C_{\varepsilon}^{\frac{1}{q}}(\alpha_{\beta}-\varepsilon)^{-\frac{1}{n'}}
((k+1)!)^{\frac{1}{q}}\|f\|_{W^{1,n}(M)}.
\end{equation}
Since $q\geq n'k$ we have $(k+1)!\leq \Gamma(q/n'+2)$, then \eqref{Hardy_manif6} implies that
\begin{equation}\label{Hardy_manif7}
\left\|\frac{f}{\rho^{\frac{\beta}{q}}}\right\|_{L^{q}(M)}\leq
(C_{\varepsilon}\Gamma(q/n'+2))^{1/q}(\alpha_{\beta}-\varepsilon)^{-1/n'}
\|f\|_{W^{1,n}(M)}
\end{equation}
for any $q\geq n$ and for all $f\in W_{0}^{1,n}(M)$, which is \eqref{Hardy_manif1}.
Now applying the Stirling formula for $q\rightarrow+\infty$, one gets
\begin{equation}\label{Gamma1}
\begin{split}
\Gamma(q/n'+2)^{1/q}&=\left((1+o(1))\sqrt{2\pi\left(q/n'+1\right)}\left(\frac{q/n'+1}{e}\right)^{q/n'+1}\right)^{1/q}
\\&=(1+o(1))\left(\frac{q}{en'}\right)^{1/n'}.
\end{split}
\end{equation}
Combining this with \eqref{Hardy_manif7}, we have as $q\rightarrow+\infty$, asymptotically
\begin{equation*}
\left\|\frac{f}{\rho^{\frac{\beta}{q}}}\right\|_{L^{q}(M)}\leq
(1+o(1))\left(\frac{q}{en'(\alpha_{\beta}-\varepsilon)}\right)^{1/n'}\|f\|_{W^{1,n}(M)},
\end{equation*}
that is, for any $\delta>0$ there exists $r\geq n$ such that
\begin{equation}\label{Hardy_manif8}
\left\|\frac{f}{\rho^{\frac{\beta}{q}}}\right\|_{L^{q}(M)}\leq
((n'e(\alpha_{\beta}-\varepsilon))^{-1/n'}+\delta)q^{1-1/n}\|f\|_{W^{1,n}(M)}
\end{equation}
holds for all $f\in W_{0}^{1,n}(M)$ and all $q$ with $r\leq q<\infty$.

Thus, we see that $A_{1}\leq (n'e(\alpha_{\beta}-\varepsilon))^{-1/n'}+\delta$, then by the arbitrariness of $\varepsilon$ and
$\delta$ we obtain $\alpha_{\beta}\leq(en'A_{1}^{n'})^{-1}$.

Now we show that \eqref{Hardy_manif1}$\Rightarrow$\eqref{Trudinger_manif1} with $1/\alpha_{\beta}\leq n'eB_{1}^{n'}$. By
\eqref{Hardy_manif1}, for any $q$ with $n\leq q<\infty$ there is $C_{1}=C_{1}(n,\beta,q,M)>0$ such that
\begin{equation}\label{Hardy_manif9}
\left\|\frac{f}{\rho^{\frac{\beta}{q}}}\right\|_{L^{q}(M)}\leq C_{1}q^{1-1/n}\|f\|_{W^{1,n}(M)}
\end{equation}
holds for all $f\in W_{0}^{1,n}(M)$. With the help of this and $\|f\|_{W^{1,n}(M)}\leq1$, we write
\begin{equation}\label{Hardy_manif10}
\int_{M}\frac{1}{\rho^{\beta}}\left(\exp(\alpha|f(x)|^{n'})-\sum_{k=0}^{n-2}
\frac{1}{k!}(\alpha|f(x)|^{n'})^{k}\right)dV_{g}
\leq \sum_{n'k\geq n,\;k\in\mathbb{N}}\frac{(\alpha n'kC_{1}^{n'})^{k}}{k!}.
\end{equation}
The series in the right hand side of \eqref{Hardy_manif10} converges when $0\leq\alpha<1/(n'eC_{1}^{n'})$. Thus, we have obtained \eqref{Trudinger_manif1} with $0\leq\alpha<1/(n'eC_{1}^{n'})$. Hence $\alpha_{\beta}\geq 1/(n'eC_{1}^{n'})$ for all $C_{1}\geq B_{1}$, which gives $\alpha_{\beta}\geq 1/(n'eB_{1})^{n'}$.

Thus, we have completed the proof of Theorem \ref{Hardy_manif_thm}.
\end{proof}
\section{Hardy type inequalities on hyperbolic spaces}
\label{SEC:Hardy_hyper}
In this section we show Hardy type inequalities with sharp constants on hyperbolic spaces and prove their equivalence with the Trudinder-Moser inequalities.
Let us start by recalling the following result on $\mathbb{H}^{n}\;(n\geq2)$:
\begin{thm}[{\cite[Theorem 1.1]{Zhu15}}]
\label{Trudinger_hyper_thm}
Let $\mathbb{H}^{n}\;(n\geq2)$ be the $n$-dimensional hyperbolic space. Let $0\leq\beta<n$ and let $0<\alpha\leq \alpha_{\beta}$ with $\alpha_{\beta}=n\omega_{n-1}^{1/(n-1)}(1-\beta/n)$. Then there
exists a positive constant $\widetilde{C_{2}}=\widetilde{C_{2}}(\alpha, \beta, n)$ such that
\begin{equation}\label{Trudinger_hyper1}
\int_{\mathbb{H}^{n}}\frac{1}{\rho^{\beta}}\left(\exp(\alpha|f(x)|^{n/(n-1)})-\sum_{k=0}^{n-2}\frac{\alpha^{k}
|f(x)|^{kn/(n-1)}}{k!}\right)dV_{g}\leq \widetilde{C_{2}}
\end{equation}
holds for all functions $f\in W_{0}^{1,n}(\mathbb{H}^{n})$
with $\|\nabla_{g} f\|_{L^{n}(\mathbb{H}^{n})}\leq1$, where $\omega_{n-1}$ is the area of the surface of the unit $n$-ball in $\mathbb{H}^{n}$. Furthermore, the constant $\alpha_{\beta}$ is sharp.
\end{thm}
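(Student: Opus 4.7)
The plan is to reduce \eqref{Trudinger_hyper1} to a one-dimensional problem via hyperbolic symmetric decreasing rearrangement, control the near-origin contribution by the sharp Euclidean weighted Moser--Trudinger inequality, and dispose of the far-field contribution using the radial decay of $f$ together with the spectral gap on $\mathbb{H}^{n}$. Sharpness of $\alpha_{\beta}$ is to be verified by testing with a Moser-type concentrating sequence.

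For the symmetrization step, let $f^{\ast}$ denote the hyperbolic symmetric decreasing rearrangement of $f$. Since the weight $\rho^{-\beta}$ is a radial decreasing function of $\rho$ and the nonlinearity
$$
\Phi_{\alpha}(t):=\exp(\alpha t^{n/(n-1)})-\sum_{k=0}^{n-2}\frac{\alpha^{k}t^{kn/(n-1)}}{k!}
$$
is nonnegative and nondecreasing on $[0,\infty)$, the Hardy--Littlewood rearrangement inequality yields
$$
\int_{\mathbb{H}^{n}}\rho^{-\beta}\Phi_{\alpha}(|f|)\,dV_{g}\le\int_{\mathbb{H}^{n}}\rho^{-\beta}\Phi_{\alpha}(f^{\ast})\,dV_{g},
$$
and the P\'olya--Szeg\"o principle on $\mathbb{H}^{n}$ gives $\|\nabla_{g}f^{\ast}\|_{L^{n}(\mathbb{H}^{n})}\le\|\nabla_{g}f\|_{L^{n}(\mathbb{H}^{n})}\le 1$. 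Hence one may assume $f=f(\rho)$ is radial decreasing, and using \eqref{polar} the problem becomes
$$
\omega_{n-1}\int_{0}^{\infty}\rho^{-\beta}\Phi_{\alpha}(f(\rho))\sinh^{n-1}\!\rho\,d\rho\le\widetilde{C_{2}}
$$
under the constraint $\omega_{n-1}\int_{0}^{\infty}|f'(\rho)|^{n}\sinh^{n-1}\!\rho\,d\rho\le 1$.

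I would then split at $\rho=1$. On $\{\rho\le 1\}$, $\sinh\rho$ and $\rho$ are comparable, and a blow-up argument exploiting $\sinh\rho/\rho\to 1$ as $\rho\to 0$ reduces the near-origin contribution to the sharp Euclidean weighted Moser--Trudinger inequality on the unit ball with weight $|x|^{-\beta}$, whose sharp constant is precisely $\alpha_{\beta}=n\omega_{n-1}^{1/(n-1)}(1-\beta/n)$ by the Adimurthi--Sandeep and Calanchi--Ruf results. On $\{\rho\ge 1\}$, $\rho^{-\beta}\le 1$ and H\"older's inequality with the radial representation $f(\rho)=-\int_{\rho}^{\infty}f'(s)\,ds$ gives the Morrey-type bound
$$
|f(\rho)|\le\omega_{n-1}^{-1/n}\Bigl(\int_{\rho}^{\infty}\frac{ds}{\sinh s}\Bigr)^{1/n'}\le C_{0},\qquad \rho\ge 1,
$$
so $f$ is uniformly bounded on the exterior, which allows one to dominate $\Phi_{\alpha}(f)\le C_{\alpha}|f|^{n}$ there. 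The spectral gap inequality $\|f\|_{L^{n}(\mathbb{H}^{n})}\lesssim\|\nabla_{g}f\|_{L^{n}(\mathbb{H}^{n})}$ then absorbs the exterior contribution into a constant depending only on $\alpha,\beta,n$. Combining the two estimates yields the required bound.

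Sharpness of $\alpha_{\beta}$ follows by testing with the hyperbolic analogue of the classical Moser sequence: set $f_{k}(\rho)=\omega_{n-1}^{-1/n}L_{k}(\rho)$ with $L_{k}=(\log k)^{(n-1)/n}$ on $\rho\le 1/k$, interpolating logarithmically as $-(\log\rho)/(\log k)^{1/n}$ on $1/k<\rho<1$, and vanishing for $\rho\ge 1$. Using $\sinh\rho\asymp\rho$ near $0$ one verifies $\|\nabla_{g}f_{k}\|_{L^{n}(\mathbb{H}^{n})}\to 1$, while for any $\alpha>\alpha_{\beta}$ the contribution of $\rho\le 1/k$ to the weighted exponential integral already blows up. The principal difficulty is matching the sharp constant $\alpha_{\beta}$ without loss: the interior estimate must absorb the curvature-induced distortion $\sinh\rho/\rho\to 1$ with no deterioration of constants, and the exterior estimate must not corrupt the optimal $\alpha_{\beta}$ produced by the interior piece, despite the rather different geometric flavours of the two regimes---polynomial singularity of $\rho^{-\beta}$ near the origin versus exponential volume growth $\sinh^{n-1}\!\rho$ at infinity.
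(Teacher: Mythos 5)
First, note that the paper does not prove this statement at all: Theorem \ref{Trudinger_hyper_thm} is recalled verbatim from Zhu \cite{Zhu15} and used as a black box, so there is no internal proof to compare with; your symmetrization-plus-splitting scheme is the standard route for such singular Moser--Trudinger inequalities and is in the spirit of the quoted literature. Most of your steps are sound: the Hardy--Littlewood and P\'olya--Szeg\"o reductions on $\mathbb{H}^{n}$ are legitimate, the interior comparison costs only a fixed factor because $\sinh\rho/\rho$ is increasing on $(0,1]$ (so the measure comparison loses $(\sinh 1)^{n-1}$ while the gradient comparison goes the right way and does not touch the exponent), the exterior Morrey bound and the $L^{n}$-Poincar\'e inequality on $\mathbb{H}^{n}$ handle $\{\rho\ge1\}$, and the Moser-type sequence does give blow-up for $\alpha>\alpha_{\beta}$.

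The genuine gap is at the critical exponent $\alpha=\alpha_{\beta}$, which the statement includes ($0<\alpha\le\alpha_{\beta}$). After restricting to $\{\rho\le1\}$ the function $f$ does not vanish at $\rho=1$, so the sharp Euclidean weighted inequality on the unit ball cannot be applied to $f$ itself; one must work with $v=f-f(1)\in W_{0}^{1,n}(B_{1})$, whose gradient norm is $\tau^{1/n}$ with $\tau=\|\nabla_{g}f\|_{L^{n}(\rho\le1)}^{n}\le1$, plus the constant $f(1)$. A Young-type splitting $(v+f(1))^{n'}\le(1+\epsilon)v^{n'}+C_{\epsilon}f(1)^{n'}$ costs a factor $(1+\epsilon)$ in the exponent and therefore only yields the subcritical range $\alpha<\alpha_{\beta}$; you flag exactly this as ``the principal difficulty'' but give no mechanism to resolve it. The standard repair, which your sketch is missing, is quantitative: your Morrey bound applied with only the exterior part of the gradient gives $f(1)\le C(1-\tau)^{1/n}$, and the convexity inequality $(a+b)^{n'}\le\theta^{1-n'}a^{n'}+(1-\theta)^{1-n'}b^{n'}$ with $\theta=\tau$ then gives $\alpha_{\beta}f^{n'}\le\alpha_{\beta}\bigl(v/\tau^{1/n}\bigr)^{n'}+\alpha_{\beta}C^{n'}$ pointwise, so the sharp Euclidean inequality is applied to $v/\tau^{1/n}$ (gradient norm $\le1$) with no loss in the constant, and the error term is uniformly bounded. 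Without this step (or an equivalent device) your argument proves \eqref{Trudinger_hyper1} only for $0<\alpha<\alpha_{\beta}$, not the full claimed range; for the purposes of the present paper this would actually suffice, since only $0<\alpha<\alpha_{\beta}$ is used in Theorem \ref{Hardy_hyper_thm}, but it does not establish the quoted statement.
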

We now show that this is equivalent to the following Hardy inequality.
\begin{thm}\label{Hardy_hyper_thm}
Let $\mathbb{H}^{n}\;(n\geq2)$ be the $n$-dimensional hyperbolic space and let $0\leq\beta<n$. Then for any $n\leq q <\infty$ there exists a
positive constant $C_{2}=C_{2}(n,\beta,q)$ such that
\begin{equation}\label{Hardy_hyper1}
\left\|\frac{f}{\rho^{\frac{\beta}{q}}}\right\|_{L^{q}(\mathbb{H}^{n})}\leq C_{2}q^{1-1/n}\|\nabla_{g} f\|_{L^{n}(\mathbb{H}^{n})}
\end{equation}
holds for all functions $f\in W_{0}^{1,n}(\mathbb{H}^{n})$. Furthermore, we have
\begin{equation}\label{equiv_identity_Hardy_hyper}
\frac{1}{\alpha_{\beta} n'e}=A_{2}^{n'}=B_{2}^{n'},
\end{equation}
where
$$\alpha_{\beta}=n\omega_{n-1}^{1/(n-1)}(1-\beta/n),$$
\begin{equation*}
\begin{split}
A_{2}=\inf\{C_{2}>0; \exists r=r(n,&\beta,C_{2}) \textrm{ with } r\geq n:\\& \eqref{Hardy_hyper1}\textrm{ holds }\forall f\in
W_{0}^{1,n}(\mathbb{H}^{n}),
\forall q
\textrm{ with } r\leq q<\infty\},
\end{split}
\end{equation*}
\begin{equation}\label{alphaDF_Hardy}
B_{2}=\limsup_{q\rightarrow \infty}\sup_{f\in W^{1,n}(\mathbb{H}^{n})\backslash\{0\}}
\frac{\left\|\frac{f}{\rho^{\frac{\beta}{q}}}\right\|_{L^{q}(\mathbb{H}^{n})}}{q^{1-1/n}\|\nabla_{g} f\|_{L^{n}(\mathbb{H}^{n})}}.
\end{equation}
The weighted Trudinger-Moser inequalities \eqref{Trudinger_hyper1} with $0<\alpha<\alpha_{\beta}$ are equivalent to the Hardy type inequalities \eqref{Hardy_hyper1} with relation \eqref{equiv_identity_Hardy_hyper}.
\end{thm}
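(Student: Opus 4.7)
The plan is to follow the template laid down in the proof of Theorem \ref{Hardy_manif_thm}, substituting $\|\nabla_g f\|_{L^n(\mathbb{H}^n)}$ for $\|f\|_{W^{1,n}(M)}$ throughout and using the hyperbolic Trudinger--Moser inequality \eqref{Trudinger_hyper1} in place of \eqref{Trudinger_manif1}. Since $B_2\le A_2$ is immediate from the definitions, to establish \eqref{equiv_identity_Hardy_hyper} it suffices to prove the two one-sided comparisons \eqref{Trudinger_hyper1}$\Rightarrow$\eqref{Hardy_hyper1} with $\alpha_\beta\le (en'A_2^{n'})^{-1}$, and \eqref{Hardy_hyper1}$\Rightarrow$\eqref{Trudinger_hyper1} with $1/\alpha_\beta\le n'eB_2^{n'}$.

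For the first implication, assuming $\|\nabla_g f\|_{L^n(\mathbb{H}^n)}\ne 0$ (otherwise $f\equiv 0$), I would replace $f$ by $f/\|\nabla_g f\|_{L^n(\mathbb{H}^n)}$ in \eqref{Trudinger_hyper1} applied with parameter $\alpha_\beta-\varepsilon$ for small $\varepsilon>0$. Retaining just the $k$-th Taylor coefficient of the exponential yields the single-exponent Hardy bound
\[
\left\|\frac{f}{\rho^{\beta/(kn')}}\right\|_{L^{kn'}(\mathbb{H}^n)}\le (\widetilde{C_2}k!)^{1/(kn')}(\alpha_\beta-\varepsilon)^{-1/n'}\|\nabla_g f\|_{L^n(\mathbb{H}^n)}
\]
for every integer $k\ge n-1$. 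For arbitrary $q\ge n$, pick the integer $k$ with $n'k\le q< n'(k+1)$ and use a two-point H\"older interpolation between the exponents $n'k$ and $n'(k+1)$, exactly as in \eqref{Hardy_manif5}. The resulting estimate features $((k+1)!)^{1/q}\le \Gamma(q/n'+2)^{1/q}$, whose asymptotic behaviour $\sim (q/(en'))^{1/n'}$ follows from Stirling's formula; passing to the limit $q\to\infty$ gives $A_2\le (n'e(\alpha_\beta-\varepsilon))^{-1/n'}+\delta$ for arbitrary $\delta,\varepsilon>0$, hence $\alpha_\beta\le(en'A_2^{n'})^{-1}$.

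For the reverse implication, I would expand the left-hand side of \eqref{Trudinger_hyper1} as a power series starting from $k=n-1$ and apply \eqref{Hardy_hyper1} with $q=kn'$ to each term. Since $kn'(1-1/n)=k$, each term is bounded by $(\alpha n' C_2^{n'}k)^k/k!$ (on the normalised ball $\|\nabla_g f\|_{L^n}\le 1$), so summing gives
\[
\sum_{n'k\ge n}\frac{(\alpha n'C_2^{n'}k)^k}{k!},
\]
which converges precisely when $0\le \alpha<1/(n'eC_2^{n'})$ by the root test combined with $k^k/k!\sim e^k/\sqrt{2\pi k}$. Consequently $\alpha_\beta\ge 1/(n'eC_2^{n'})$ for every admissible $C_2\ge B_2$, yielding $\alpha_\beta\ge 1/(n'eB_2^{n'})$. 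Combined with the first implication this proves \eqref{equiv_identity_Hardy_hyper} together with the stated equivalence, and \eqref{Hardy_hyper1} itself as a by-product of the direction Trudinger--Moser $\Rightarrow$ Hardy.

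The main technical obstacle is not any single step but the bookkeeping that converts the Taylor series bounds into the exact asymptotic constant $(\alpha_\beta n'e)^{-1/n'}$ on both sides; in particular one must ensure the $q$-dependent factor $q^{1-1/n}$ on the right-hand side of \eqref{Hardy_hyper1} matches the growth $\Gamma(q/n'+2)^{1/q}$ coming from Stirling, and that the series estimate used in the reverse direction is sharp enough that the threshold $1/(n'eC_2^{n'})$ cannot be improved. In the hyperbolic setting this is straightforward because the Trudinger--Moser constant $\alpha_\beta$ in \eqref{Trudinger_hyper1} is itself sharp (by \cite[Theorem~1.1]{Zhu15}), so no loss is incurred at either end of the equivalence.
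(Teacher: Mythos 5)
Your proposal is correct and follows essentially the same route as the paper's proof: termwise extraction from the Trudinger--Moser series after normalising by $\|\nabla_g f\|_{L^n(\mathbb{H}^n)}$, H\"older interpolation between the exponents $n'k$ and $n'(k+1)$, Stirling asymptotics for $\Gamma(q/n'+2)^{1/q}$ to get $\alpha_\beta\leq(en'A_2^{n'})^{-1}$, and in the reverse direction the termwise application of \eqref{Hardy_hyper1} with $q=kn'$ giving the series $\sum(\alpha n'kC_2^{n'})^k/k!$ and the threshold $1/(n'eC_2^{n'})$.
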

\begin{rem}\label{rem_B2} An analogue of Remark \ref{rem_B1} holds, in particular, $B_{2}$ is asymptotically sharp for \eqref{Hardy_hyper1}.
\end{rem}
The proof is similar to that of Theorem \ref{Hardy_manif_thm} but we give it here for clarity.
\begin{proof}[Proof of Theorem \ref{Hardy_hyper_thm}] Since $B_{2}\leq A_{2}$, in order to obtain \eqref{equiv_identity_Hardy_hyper} it suffices to show that \eqref{Trudinger_hyper1}$\Rightarrow$\eqref{Hardy_hyper1} with $\alpha_{\beta}\leq(en'A_{2}^{n'})^{-1}$ and
\eqref{Hardy_hyper1}$\Rightarrow$\eqref{Trudinger_hyper1} with $1/\alpha_{\beta}\leq n'eB_{2}^{n'}$. We first show that \eqref{Trudinger_hyper1}$\Rightarrow$\eqref{Hardy_hyper1} with $\alpha_{\beta}\leq(en'A_{2}^{n'})^{-1}$. The case $\|\nabla_{g} f\|_{L^{n}(\mathbb{H}^{n})}=0$ is trivial, since we have $f\equiv0$ by the definition of $f\in W_{0}^{1,n}(\mathbb{H}^{n})$. Therefore, we can replace $f$ by $f/\|\nabla_{g} f\|_{L^{n}(\mathbb{H}^{n})}$ in \eqref{Trudinger_hyper1} with $0<\alpha<\alpha_{\beta}$ to get
\begin{equation}\label{Hardy_hyper3_0}
\int_{\mathbb{H}^{n}}\frac{1}{\rho^{\beta}}\sum_{k=n-1}^{\infty}
\frac{\alpha^{k}|f(x)|^{kn'}}{k!\|\nabla_{g} f\|_{L^{n}(\mathbb{H}^{n})}^{kn'}}dV_{g}\leq \widetilde{C_{2}}.
\end{equation}
In other words, it means that there exists $C_{\varepsilon}$ for any $\varepsilon$ with $0<\varepsilon<\alpha_{\beta}$ such that
\begin{equation}\label{Hardy_hyper3}
\int_{\mathbb{H}^{n}}\frac{1}{\rho^{\beta}}\sum_{k=n-1}^{\infty}
\frac{(\alpha_{\beta}-\varepsilon)^{k}|f(x)|^{kn'}}{k!\|\nabla_{g} f\|_{L^{n}(\mathbb{H}^{n})}^{kn'}}dV_{g}\leq C_{\varepsilon}.
\end{equation}
In particular, it follows that
\begin{equation}\label{Hardy_hyper4}
\left\|\frac{f}{\rho^{\frac{\beta}{kn'}}}\right\|_{L^{kn'}(\mathbb{H}^{n})}\leq (C_{\varepsilon}k!)^{1/(kn')}
(\alpha_{\beta}-\varepsilon)^{-1/n'}\|\nabla_{g} f\|_{L^{n}(\mathbb{H}^{n})}
\end{equation}
for all $k\geq n-1$. Moreover, for any $q\geq n$, there exists an integer $k\geq n-1$ satisfying $n'k\leq q <n'(k+1)$. Then, applying H\"{o}lder's inequality for $\frac{\theta q}{n'k}+\frac{(1-\theta)
q}{n'(k+1)}=1$ with $0<\theta\leq1$ one calculates
\begin{equation}\label{Hardy_hyper41}
\begin{split}
\int_{\mathbb{H}^{n}}\frac{|f(x)|^{q}}{\rho^{\beta}}dV_{g}&=\int_{\mathbb{H}^{n}}\frac{|f(x)|^{\theta q}}{\rho^{\frac{\beta\theta
q}{n'k}}}\cdot\frac{|f(x)|^{(1-\theta)q}}{\rho^{\frac{\beta(1-\theta) q}{n'(k+1)}}}dV_{g}\\&
\leq \left(\int_{\mathbb{H}^{n}}\frac{|f(x)|^{n'k}}{\rho^{\beta}}dV_{g}\right)^{\frac{\theta q}{n'k}}
\left(\int_{\mathbb{H}^{n}}\frac{|f(x)|^{n'(k+1)}}{\rho^{\beta}}dV_{g}\right)^{\frac{(1-\theta) q}{n'(k+1)}}\\&
=\left\|\frac{f}{\rho^{\frac{\beta}{n'k}}}\right\|_{L^{n'k}(\mathbb{H}^{n})}^{\theta q}
\left\|\frac{f}{\rho^{\frac{\beta}{n'(k+1)}}}\right\|_{L^{n'(k+1)}(\mathbb{H}^{n})}^{(1-\theta)q},
\end{split}
\end{equation}
which implies that
\begin{equation}\label{Hardy_hyper5}
\left\|\frac{f}{\rho^{\frac{\beta}{q}}}\right\|_{L^{q}(\mathbb{H}^{n})}
\leq\left\|\frac{f}{\rho^{\frac{\beta}{n'k}}}\right\|_{L^{n'k}(\mathbb{H}^{n})}^{\theta}
\left\|\frac{f}{\rho^{\frac{\beta}{n'(k+1)}}}\right\|_{L^{n'(k+1)}(\mathbb{H}^{n})}^{1-\theta}.
\end{equation}
We can combine this with \eqref{Hardy_hyper4} to derive that
\begin{equation}\label{Hardy_hyper6}
\left\|\frac{f}{\rho^{\frac{\beta}{q}}}\right\|_{L^{q}(\mathbb{H}^{n})}\leq
C_{\varepsilon}^{\frac{1}{q}}(\alpha_{\beta}-\varepsilon)^{-\frac{1}{n'}}
((k+1)!)^{\frac{1}{q}}\|\nabla_{g} f\|_{L^{n}(\mathbb{H}^{n})},
\end{equation}
that is,
\begin{equation}\label{Hardy_hyper7}
\left\|\frac{f}{\rho^{\frac{\beta}{q}}}\right\|_{L^{q}(\mathbb{H}^{n})}\leq
(C_{\varepsilon}\Gamma(q/n'+2))^{1/q}(\alpha_{\beta}-\varepsilon)^{-1/n'}
\|\nabla_{g} f\|_{L^{n}(\mathbb{H}^{n})}
\end{equation}
for any $q\geq n$ and for all $f\in W_{0}^{1,n}(\mathbb{H}^{n})$, which is \eqref{Hardy_hyper1}, where we have used $(k+1)!\leq \Gamma(q/n'+2)$ when $q\geq n'k$. Now taking into account the behavior of $\Gamma(q/n'+2)$ for $q\rightarrow+\infty$ by \eqref{Gamma1}, \eqref{Hardy_hyper7} gives that for any $\delta>0$ there exists $r\geq n$ such that
\begin{equation}\label{Hardy_hyper8}
\left\|\frac{f}{\rho^{\frac{\beta}{q}}}\right\|_{L^{q}(\mathbb{H}^{n})}\leq
((n'e(\alpha_{\beta}-\varepsilon))^{-1/n'}+\delta)q^{1-1/n}\|\nabla_{g} f\|_{L^{n}(\mathbb{H}^{n})}
\end{equation}
holds for all $f\in W_{0}^{1,n}(\mathbb{H}^{n})$ and all $q$ with $r\leq q<\infty$.

Thus, we get $A_{2}\leq (n'e(\alpha_{\beta}-\varepsilon))^{-1/n'}+\delta$. Since $\varepsilon$ and
$\delta$ are arbitrary, it implies that $\alpha_{\beta}\leq(en'A_{2}^{n'})^{-1}$.

It remains to show that \eqref{Hardy_hyper1}$\Rightarrow$\eqref{Trudinger_hyper1} with $1/\alpha_{\beta}\leq n'eB_{2}^{n'}$. Since we have \eqref{Hardy_hyper1}, we can write that for any $q$ with $n\leq q<\infty$ there is $C_{2}=C_{2}(n,\beta,q)>0$ such that
\begin{equation}\label{Hardy_hyper9}
\left\|\frac{f}{\rho^{\frac{\beta}{q}}}\right\|_{L^{q}(\mathbb{H}^{n})}\leq C_{2}q^{1-1/n}\|\nabla_{g}
f\|_{L^{n}(\mathbb{H}^{n})}
\end{equation}
holds for all $f\in W_{0}^{1,n}(\mathbb{H}^{n})$.
Employing this and $\|\nabla_{g} f\|_{L^{n}(\mathbb{H}^{n})}\leq1$, we get
\begin{equation}\label{Hardy_hyper10}
\int_{\mathbb{H}^{n}}\frac{1}{\rho^{\beta}}\left(\exp(\alpha|f(x)|^{n'})-\sum_{k=0}^{n-2}
\frac{1}{k!}(\alpha|f(x)|^{n'})^{k}\right)dV_{g}\leq\sum_{n'k\geq n,\;k\in\mathbb{N}}\frac{(\alpha n'kC_{2}^{n'})^{k}}{k!}.
\end{equation}
The series in the right hand side of \eqref{Hardy_hyper10} converges when $0\leq\alpha<1/(n'eC_{2}^{n'})$. Thus, we have obtained \eqref{Trudinger_hyper1} with $0\leq\alpha<1/(n'eC_{2}^{n'})$. Hence, $\alpha_{\beta}\geq 1/(n'eC_{2})^{n'}$ for all $C_{2}\geq B_{2}$, that is, $\alpha_{\beta}\geq 1/(n'eB_{2}^{n'})$. This completes the proof of Theorem \ref{Hardy_hyper_thm}.
\end{proof}
Now let us show the corresponding uncertainty type principle on hyperbolic spaces.
\begin{thm}\label{uncer_thm} Let $\mathbb{H}^{n}\;(n\geq2)$ be the $n$-dimensional hyperbolic space and let $0\leq\beta<n$. Then we have
\begin{multline}\label{uncer_1}
\left(\int_{\mathbb{H}^{n}}|\nabla_{g}
f(x)|^{n}dV_{g}\right)^{1/n}\left(\int_{\mathbb{H}^{n}}\rho^{q'}|f(x)|^{q'}dV_{g}\right)^{1/q'}\\ \geq
C_{2}^{-1}q^{1/n-1}\int_{\mathbb{H}^{n}}\rho^{\frac{q-\beta}{q}}|f(x)|^{2}dV_{g}
\end{multline}
for all functions $f\in W_{0}^{1,n}(\mathbb{H}^{n})$, where $1/q+1/q'=1$, and $C_{2}$ is the constant from \eqref{Hardy_hyper1}.
\end{thm}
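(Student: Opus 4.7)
The plan is to derive the inequality from the Hardy inequality \eqref{Hardy_hyper1} via a single application of H\"older's inequality. The key observation is that the integrand on the right-hand side, $\rho^{(q-\beta)/q}|f|^{2}$, factors naturally as a product of two pieces whose $L^{q'}$- and $L^{q}$-norms are exactly $\|\rho f\|_{L^{q'}}$ and $\|f/\rho^{\beta/q}\|_{L^{q}}$, the latter being the quantity controlled by the Hardy inequality.

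Concretely, I would write
\begin{equation*}
\rho^{\frac{q-\beta}{q}}|f(x)|^{2}=\bigl(\rho|f(x)|\bigr)\cdot\Bigl(\rho^{-\beta/q}|f(x)|\Bigr),
\end{equation*}
integrate over $\mathbb{H}^{n}$, and apply H\"older's inequality with conjugate exponents $q'$ and $q$ (using $1/q+1/q'=1$) to obtain
\begin{equation*}
\int_{\mathbb{H}^{n}}\rho^{\frac{q-\beta}{q}}|f(x)|^{2}\,dV_{g}\leq \left(\int_{\mathbb{H}^{n}}\rho^{q'}|f(x)|^{q'}\,dV_{g}\right)^{1/q'}\left\|\frac{f}{\rho^{\beta/q}}\right\|_{L^{q}(\mathbb{H}^{n})}.
\end{equation*}
Then I would invoke Theorem \ref{Hardy_hyper_thm}, namely inequality \eqref{Hardy_hyper1}, to bound the last factor by $C_{2}q^{1-1/n}\|\nabla_{g}f\|_{L^{n}(\mathbb{H}^{n})}$. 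Rearranging the resulting estimate, so that the two norms appear on the left and the constant $C_{2}^{-1}q^{1/n-1}$ on the right, yields exactly \eqref{uncer_1}.

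There is no real obstacle here: once the algebraic splitting of $\rho^{(q-\beta)/q}|f|^{2}$ is noticed, the result follows immediately from H\"older plus Hardy. The only points worth checking carefully are that the factorisation is consistent (i.e.\ $\rho^{1}\cdot\rho^{-\beta/q}=\rho^{(q-\beta)/q}$ and the exponents of $|f|$ add to $2$), that the conjugate-exponent pairing recovers precisely $\|\rho f\|_{L^{q'}}$ and $\|f/\rho^{\beta/q}\|_{L^{q}}$, and that the power $q^{1-1/n}$ from Hardy becomes $q^{1/n-1}$ after being moved to the other side. Admissibility of the endpoint computations for $f\in W_{0}^{1,n}(\mathbb{H}^{n})$ is automatic by density of $C_{0}^{\infty}(\mathbb{H}^{n})$ in that space, so the argument passes from smooth compactly supported functions to the full Sobolev class without further work.
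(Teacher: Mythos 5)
Your argument is correct and is essentially the paper's own proof, just written in the reverse order: the paper first lower-bounds $\|\nabla_{g}f\|_{L^{n}}$ via \eqref{Hardy_hyper1} and then applies H\"older to the product $\left\|\frac{f}{\rho^{\beta/q}}\right\|_{L^{q}}\|\rho f\|_{L^{q'}}$, whereas you apply H\"older to the factorisation $\rho^{\frac{q-\beta}{q}}|f|^{2}=(\rho|f|)\cdot(\rho^{-\beta/q}|f|)$ first and then invoke Hardy. The exponent bookkeeping and the rearrangement of $C_{2}q^{1-1/n}$ into $C_{2}^{-1}q^{1/n-1}$ are exactly as in the paper, so nothing further is needed.
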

\begin{proof}[Proof of Theorem \ref{uncer_thm}] Using \eqref{Hardy_hyper1} and H\"{o}lder's inequality, we calculate
\begin{equation*}
\begin{split}
\left(\int_{\mathbb{H}^{n}}|\nabla_{g} f(x)|^{n}dV_{g}\right)^{1/n}&\left(\int_{\mathbb{H}^{n}}\rho^{q'}|f(x)|^{q'}dx\right)^{1/q'}\\&
\geq C_{2}^{-1}q^{1/n-1}\left(\int_{\mathbb{H}^{n}}\frac{|f(x)|^{q}}{\rho^{\beta}}dV_{g}\right)^{1/q}
\left(\int_{\mathbb{H}^{n}}\rho^{q'}|f(x)|^{q'}dV_{g}\right)^{1/q'}\\&
\geq C_{2}^{-1}q^{1/n-1}\int_{\mathbb{H}^{n}}\rho^{\frac{q-\beta}{q}}|f(x)|^{2}dV_{g},
\end{split}
\end{equation*}
which gives \eqref{uncer_1}.
\end{proof}
\section{Caffarelli-Kohn-Nirenberg inequalities on hyperbolic spaces}
\label{SEC:CKN}
In this section we give new Caffarelli-Kohn-Nirenberg inequalities on hyperbolic spaces, and show their equivalence with the weighted
Trudinger-Moser inequalities.

Let us first show that the obtained Hardy inequalities in turn imply the following Caffarelli-Kohn-Nirenberg type inequalities on hyperbolic spaces.
\begin{thm}\label{CKN_thm} Let $\mathbb{H}^{n}\;(n\geq2)$ be the $n$-dimensional hyperbolic space. Let $b$, $c\in\mathbb{R}$, $0<p_{3}<\infty$ and $1<p_{2}<\infty$. Let $\delta\in(0,1]\cap\left(\frac{p_{3}-p_{2}}{p_{3}},1\right]$. Let $0\leq b(1-\delta)-c<n(1/p_{3}-(1-\delta)/p_{2})$ and $n\leq\frac{\delta
p_{2}p_{3}}{p_{2}-(1-\delta)p_{3}}$. Then we have
\begin{equation}\label{CKN_1}
\|\rho^{c}f\|_{L^{p_{3}}(\mathbb{H}^{n})}
\leq \widehat{C_{3}}\|\nabla_{g} f\|^{\delta}_{L^{n}(\mathbb{H}^{n})}
\|\rho^{b}f\|^{1-\delta}_{L^{p_{2}}(\mathbb{H}^{n})},
\end{equation}
for all functions $f\in W_{0}^{1,n}(\mathbb{H}^{n})$, where \\$\widehat{C_{3}}=C_{2}^{\delta}\left(\frac{\delta p_{2}p_{3}}
{p_{2}-(1-\delta)p_{3}}\right)^{\delta-\frac{\delta}{n}}$, and $C_{2}$ is the constant from
\eqref{Hardy_hyper1}.
\end{thm}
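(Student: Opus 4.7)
The plan is to deduce the Caffarelli-Kohn-Nirenberg inequality \eqref{CKN_1} directly from the Hardy inequality \eqref{Hardy_hyper1} of Theorem \ref{Hardy_hyper_thm} by a single application of H\"older's inequality, with the exponents chosen precisely so that the arising weighted $L^q$-norm matches a norm covered by the Hardy inequality and the other factor matches the weighted $L^{p_2}$-norm on the right-hand side of \eqref{CKN_1}.

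More concretely, I would first define the candidate auxiliary parameters
\begin{equation*}
q:=\frac{\delta p_{2}p_{3}}{p_{2}-(1-\delta)p_{3}},\qquad \beta:=\frac{q(b(1-\delta)-c)}{\delta},
\end{equation*}
and check that the hypotheses of the theorem translate exactly to the hypotheses of Theorem \ref{Hardy_hyper_thm}. The condition $\delta\in((p_{3}-p_{2})/p_{3},1]$ ensures $p_{2}-(1-\delta)p_{3}>0$, so $q$ is finite and positive; the assumption $n\leq \delta p_{2}p_{3}/(p_{2}-(1-\delta)p_{3})$ is precisely $q\geq n$. Similarly, $0\leq b(1-\delta)-c$ gives $\beta\geq 0$, while $b(1-\delta)-c<n(1/p_{3}-(1-\delta)/p_{2})=n\delta/q$ translates exactly to $\beta<n$. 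So $q$ and $\beta$ are admissible for \eqref{Hardy_hyper1}.

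Next, I would write $|f|^{p_{3}}=|f|^{p_{3}\delta}|f|^{p_{3}(1-\delta)}$ and split the weight $\rho^{cp_{3}}$ as $\rho^{-\beta p_{3}\delta/q}\cdot\rho^{cp_{3}+\beta p_{3}\delta/q}$, and then apply H\"older's inequality with conjugate exponents $q/(p_{3}\delta)$ and $p_{2}/(p_{3}(1-\delta))$ (which are indeed conjugate by the definition of $q$). This gives
\begin{equation*}
\int_{\mathbb{H}^{n}}\rho^{cp_{3}}|f|^{p_{3}}dV_{g}\leq\left(\int_{\mathbb{H}^{n}}\frac{|f|^{q}}{\rho^{\beta}}dV_{g}\right)^{\!p_{3}\delta/q}\!\left(\int_{\mathbb{H}^{n}}\rho^{bp_{2}}|f|^{p_{2}}dV_{g}\right)^{\!p_{3}(1-\delta)/p_{2}},
\end{equation*}
where the exponent on $\rho$ in the second factor comes out to $(cp_{3}+\beta p_{3}\delta/q)\cdot p_{2}/(p_{3}(1-\delta))=bp_{2}$ thanks to the definition of $\beta$. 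Then applying \eqref{Hardy_hyper1} to the first factor and taking $p_{3}$-th roots yields
\begin{equation*}
\|\rho^{c}f\|_{L^{p_{3}}(\mathbb{H}^{n})}\leq C_{2}^{\delta}\,q^{\delta-\delta/n}\,\|\nabla_{g}f\|_{L^{n}(\mathbb{H}^{n})}^{\delta}\,\|\rho^{b}f\|_{L^{p_{2}}(\mathbb{H}^{n})}^{1-\delta},
\end{equation*}
which is exactly \eqref{CKN_1} with the claimed constant $\widehat{C_{3}}=C_{2}^{\delta}(\delta p_{2}p_{3}/(p_{2}-(1-\delta)p_{3}))^{\delta-\delta/n}$.

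The step requiring most care is the bookkeeping of exponents in H\"older's inequality: one has to solve simultaneously the two linear relations $p_{3}(1-\delta)\cdot(\text{second exponent})=p_{2}$ and $(cp_{3}+\beta p_{3}\delta/q)\cdot(\text{second exponent})=bp_{2}$, and verify that the resulting $q$ and $\beta$ fall within the admissibility range $q\geq n$, $0\leq\beta<n$ of Theorem \ref{Hardy_hyper_thm}. This bookkeeping is precisely where each of the four hypothesized inequalities on $b,c,\delta,p_{2},p_{3},n$ enters the argument; once the matching is verified, the rest is a one-line application of Hardy's inequality.
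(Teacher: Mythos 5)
Your proposal is correct and follows essentially the same route as the paper: the same splitting $|f|^{p_3}=|f|^{p_3\delta}|f|^{p_3(1-\delta)}$ with the weight distributed so that H\"older with exponents $\frac{q}{p_3\delta}$ and $\frac{p_2}{p_3(1-\delta)}$ (where $q=\frac{\delta p_2p_3}{p_2-(1-\delta)p_3}$) produces exactly the weighted $L^q$-norm covered by \eqref{Hardy_hyper1} with $\beta=q\frac{b(1-\delta)-c}{\delta}\in[0,n)$, yielding the same constant $\widehat{C_3}$. The only cosmetic difference is that the paper handles $\delta=1$ as a separate (trivial) case, whereas in your unified computation this corresponds to the degenerate H\"older exponents $1$ and $\infty$, which is fine.
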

\begin{proof}[Proof of Theorem \ref{CKN_thm}]
{\bf Case $\delta=1$}. In this case, we have $0\leq-c<n/p_{3}$, $n\leq p_{3}<\infty$ and $\widehat{C_{3}}=C_{2}p_{3}^{1-1/n}$, so \eqref{CKN_1} is equivalent to
\eqref{Hardy_hyper1}.

{\bf Case $\delta\in(0,1)\cap\left(\frac{p_{3}-p_{2}}{p_{3}},1\right)$}.
Using H\"{o}lder's inequality for $\frac{
p_{2}-(1-\delta)p_{3}}{p_{2}}+\frac{(1-\delta)p_{3}}{p_{2}}=1$, we calculate
\begin{equation}\label{CKN_cal}
\begin{split}
\|\rho^{c}f\|_{L^{p_{3}}(\mathbb{H}^{n})}&=
\left(\int_{\mathbb{H}^{n}}\rho^{cp_{3}}|f(x)|^{p_{3}}dV_{g}\right)^{\frac{1}{p_{3}}}\\&
=\left(\int_{\mathbb{H}^{n}}\frac{|f(x)|^{\delta p_{3}}}{\rho^{\delta p_{3}\left(\frac{b(1-\delta)-c}{\delta}\right)}}\cdot
\frac{|f(x)|^{(1-\delta)p_{3}}}{\rho^{-bp_{3}(1-\delta)}}dV_{g}\right)^{\frac{1}{p_{3}}}
\\&
\leq
\left(\int_{\mathbb{H}^{n}}\frac{|f(x)|^{\frac{\delta p_{2}p_{3}}{p_{2}-(1-\delta)p_{3}}}}{\rho^{\frac{b(1-\delta)-c}{\delta}
\cdot\frac{\delta p_{2}p_{3}}{p_{2}-(1-\delta)p_{3}}}}dV_{g}\right)^{\frac{p_{2}-(1-\delta)p_{3}}{{p_{2}p_{3}}}}
\left(\int_{\mathbb{H}^{n}}\frac{|f(x)|^{p_{2}}}{\rho^{-bp_{2}}}dV_{g}\right)^{\frac{1-\delta}{p_{2}}}\\&
=\left\|\frac{f}{\rho^{\frac{b(1-\delta)-c}{\delta}}}\right\|^{\delta}_{L^{\frac{\delta p_{2}p_{3}}{p_{2}-(1-\delta)p_{3}}}(\mathbb{H}^{n})}
\left\|\frac{f}{\rho^{-b}}\right\|^{1-\delta}_{L^{p_{2}}(\mathbb{H}^{n})}.
\end{split}
\end{equation}
Since we have $\delta>\frac{p_{3}-p_{2}}{p_{3}}$, that is, $n\leq\frac{\delta
p_{2}p_{3}}{p_{2}-(1-\delta)p_{3}}<\infty$, and $0\leq\frac{b(1-\delta)-c}{\delta}<\frac{n(p_{2}-(1-\delta)p_{3})}{\delta
p_{2}p_{3}}$, then using \eqref{Hardy_hyper1} in \eqref{CKN_cal} we obtain the desired inequality \eqref{CKN_1}.
\end{proof}

Now we show other types of Caffarelli-Kohn-Nirenberg inequalities with sharp constants, which are equivalent to Moser-Trudinger inequalities. First, let us start by recalling the following Moser-Trudinger inequality on $\mathbb{H}^{n}\;(n\geq2)$:
\begin{thm}[{\cite[Theorem 1.3]{LT13}}]
\label{Trudinger_GN_hyper_thm}
Let $\mathbb{H}^{n}\;(n\geq2)$ be the $n$-dimensional hyperbolic space. Let $0\leq\beta<n$ and let $0<\alpha<\alpha_{\beta}$ with $\alpha_{\beta}=n\omega_{n-1}^{1/(n-1)}(1-\beta/n)$. Then there exists a positive constant
$\widehat{C_{4}}=\widehat{C_{4}}(n, \alpha, \beta)$ such that
\begin{multline}\label{Trudinger_GN_hyper1}
\int_{\mathbb{H}^{n}}\frac{1}{\rho^{\beta}}\left(\exp(\alpha|f(x)|^{n/(n-1)})-\sum_{k=0}^{n-2}\frac{
\alpha^{k}|f(x)|^{kn/(n-1)}}{k!}\right)dV_{g}\leq \widehat{C_{4}}\int_{\mathbb{H}^{n}}
\frac{|f(x)|^{n}}{\rho^{\beta}}dV_{g}
\end{multline}
holds for all functions $f\in W_{0}^{1,n}(\mathbb{H}^{n})$ with $\|\nabla_{g}
f\|_{L^{n}(\mathbb{H}^{n})}\leq1$, where $\omega_{n-1}$ is the area of the surface of the unit $n$-ball in $\mathbb{H}^{n}$. Moreover, the constant $\alpha_{\beta}$ is sharp.
\end{thm}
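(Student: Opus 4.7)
The plan is to bootstrap from the known subcritical singular Moser--Trudinger inequality \eqref{Trudinger_hyper1} using a truncation argument of Adimurthi--Druet type. I would first reduce to the case where $f$ is radial and non-increasing via hyperbolic symmetric decreasing rearrangement: since $\rho^{-\beta}$ is radially non-increasing and the integrand of \eqref{Trudinger_GN_hyper1} is a non-decreasing function of $|f|$, replacing $f$ by its rearrangement $f^{*}$ only increases both sides, while the P\'olya--Szeg\H{o} principle on $\mathbb{H}^{n}$ preserves $\|\nabla_{g}f^{*}\|_{L^{n}(\mathbb{H}^{n})}\le 1$.

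Next, I would split $\mathbb{H}^{n}=\Omega_{1}\cup\Omega_{2}$ with $\Omega_{1}:=\{|f|\le 1\}$ and $\Omega_{2}:=\{|f|>1\}$. On $\Omega_{1}$ the estimate is immediate: since $kn/(n-1)\ge n$ for every $k\ge n-1$ and $|f|\le 1$, we have $|f|^{kn/(n-1)}\le|f|^{n}$, and summing the Taylor series termwise gives
\begin{equation*}
\int_{\Omega_{1}}\frac{1}{\rho^{\beta}}\sum_{k=n-1}^{\infty}\frac{\alpha^{k}|f(x)|^{kn/(n-1)}}{k!}\,dV_{g}\le\biggl(\sum_{k=n-1}^{\infty}\frac{\alpha^{k}}{k!}\biggr)\int_{\mathbb{H}^{n}}\frac{|f(x)|^{n}}{\rho^{\beta}}\,dV_{g},
\end{equation*}
which already has the desired form.

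On $\Omega_{2}$, I would introduce the truncation $v:=(|f|-1)_{+}\in W_{0}^{1,n}(\mathbb{H}^{n})$, for which $\|\nabla_{g}v\|_{L^{n}(\mathbb{H}^{n})}\le 1$ by the usual chain rule. The elementary Young-type inequality $(1+t)^{n/(n-1)}\le(1+\epsilon)t^{n/(n-1)}+C_{\epsilon}$, valid for $t\ge 0$, gives, on $\Omega_{2}$,
\begin{equation*}
\exp\bigl(\alpha|f(x)|^{n/(n-1)}\bigr)\le e^{\alpha C_{\epsilon}}\exp\bigl(\alpha(1+\epsilon)v(x)^{n/(n-1)}\bigr).
\end{equation*}
Choosing $\epsilon>0$ so small that $\alpha(1+\epsilon)<\alpha_{\beta}$, applying Theorem \ref{Trudinger_hyper_thm} to $v$ at exponent $\alpha(1+\epsilon)$ controls the ``tail'' part of the expansion of the right-hand side by a constant; meanwhile the finitely many lower-order Taylor monomials $v^{kn/(n-1)}$, $0\le k\le n-2$, satisfy $v^{kn/(n-1)}\le|f|^{kn/(n-1)}\le|f|^{n}$ on $\Omega_{2}$ (since $|f|>1$ and $kn/(n-1)\le n$), producing a contribution bounded by a multiple of $\int_{\Omega_{2}}|f|^{n}/\rho^{\beta}\,dV_{g}$.

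The main obstacle I expect is absorbing the additive constant produced by the Moser--Trudinger step on $\Omega_{2}$. Collecting the pieces yields an estimate of the shape $\mathrm{LHS}\le \mathrm{const}+C\int_{\mathbb{H}^{n}}|f|^{n}/\rho^{\beta}\,dV_{g}$, and turning this into $\mathrm{LHS}\le C'\int|f|^{n}/\rho^{\beta}\,dV_{g}$ requires a dichotomy. If $\int|f|^{n}/\rho^{\beta}\,dV_{g}\ge\delta_{0}$ for a threshold $\delta_{0}$ depending only on $\alpha,\beta,n$, the additive constant is absorbed trivially; if the integral is below $\delta_{0}$, the radial non-increasing structure forces the weighted measure of $\Omega_{2}$ to shrink, and one must refine the argument with a truncation at a level $t\gg 1$ (replacing $v$ by $(|f|-t)_{+}$) combined with a concentration-compactness argument to kill the residual constant. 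Making this dichotomy uniform over all admissible $f$ is the most delicate point of the proof.
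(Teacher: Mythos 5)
First, note that the paper does not prove this statement at all: it is imported verbatim from \cite[Theorem 1.3]{LT13} and then used as a black box (together with \eqref{CKN_1}) to derive Theorem \ref{analog_LT13}. So there is no internal proof to compare with; what can be assessed is whether your sketch would constitute an independent proof, and it has two genuine gaps.

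The first gap is the symmetrization reduction. For a ratio-form inequality such as \eqref{Trudinger_GN_hyper1}, rearranging $f$ to $f^{*}$ (centered at $x_{0}$) increases \emph{both} sides: by Hardy--Littlewood, $\int_{\mathbb{H}^{n}}|f|^{n}\rho^{-\beta}dV_{g}\leq\int_{\mathbb{H}^{n}}|f^{*}|^{n}\rho^{-\beta}dV_{g}$, and for $\beta>0$ the ratio of these two quantities is not uniformly bounded (take $f$ supported in a unit ball at distance $R$ from $x_{0}$: the right-hand side for $f$ is of order $R^{-\beta}\|f\|_{L^{n}}^{n}$, while for $f^{*}$ it is of order $\|f\|_{L^{n}}^{n}$). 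Hence ``LHS$(f)\leq$ LHS$(f^{*})\leq C\,$RHS$(f^{*})$'' does not yield LHS$(f)\leq C\,$RHS$(f)$, and you cannot assume $f$ radial non-increasing. This is precisely the obstruction that forces the present paper to remove the radial-decreasing hypothesis of Theorem \ref{Trudinger_GN2_hyper_thm} by a completely different route (the equivalence machinery of Theorem \ref{GN_hyper_thm}, see Theorem \ref{Trudinger_GN2_hyper_analog_thm}) rather than by rearrangement; if symmetrization sufficed, that step would be unnecessary.

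The second gap is the one you yourself flag: after splitting on $\{|f|\leq1\}$ and $\{|f|>1\}$ and applying \eqref{Trudinger_hyper1} to $v=(|f|-1)_{+}$, you only reach an estimate of the form $\mathrm{LHS}\leq \mathrm{const}+C\int_{\mathbb{H}^{n}}|f|^{n}\rho^{-\beta}dV_{g}$, and the additive constant cannot be absorbed when $\int_{\mathbb{H}^{n}}|f|^{n}\rho^{-\beta}dV_{g}$ is small while $\|\nabla_{g}f\|_{L^{n}}=1$. Handling exactly this regime is the entire content of the cited result; the proposed dichotomy with a truncation at level $t\gg1$ and an appeal to ``concentration-compactness'' is a statement of the difficulty, not an argument, and it is moreover entangled with the invalid radial reduction above. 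Finally, the statement also asserts sharpness of $\alpha_{\beta}$, which your proposal does not address at all (compare the explicit test sequence $f_{j}$ used in the proof of Theorem \ref{analog_LT13}). As it stands, the proposal proves the easy part (the region $\{|f|\leq1\}$ and the reduction of the exponential tail to a bounded quantity) but not the theorem.
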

First, we establish an extension of this result allowing weights of different orders.
\begin{thm}\label{analog_LT13}
Let $\mathbb{H}^{n}\;(n\geq2)$ be the $n$-dimensional hyperbolic space. Let $0\leq\beta_{1}<n$ and $\beta_{2}\in \mathbb{R}$. Let $0<\alpha<\alpha_{\beta_{1}}$ with $\alpha_{\beta_{1}}=n\omega_{n-1}^{1/(n-1)}(1-\beta_{1}/n)$. Let \\\begin{equation}\label{delta}
\delta=
\begin{cases} 0, \text{\;if}\; \beta_{1}=\beta_{2};\\
\delta:0\leq \beta_{1}-\beta_{2}(1-\delta)<n\delta\leq n, \text{\;if}\; \beta_{1}\neq\beta_{2}.\end{cases}
\end{equation} Then there exists a positive constant
$\widetilde{C_{3}}=\widetilde{C_{3}}(n, \alpha, \beta_{1}, \beta_{2}, \delta)$ such that
\begin{multline}\label{analog_LT13_1}
\int_{\mathbb{H}^{n}}\frac{1}{\rho^{\beta_{1}}}\left(\exp(\alpha|f(x)|^{n/(n-1)})-\sum_{k=0}^{n-2}\frac{
\alpha^{k}|f(x)|^{kn/(n-1)}}{k!}\right)dV_{g} \\\leq \widetilde{C_{3}}\left(\int_{\mathbb{H}^{n}}
\frac{|f(x)|^{n}}{\rho^{\beta_{2}}}dV_{g}\right)^{1-\delta}
\end{multline}
holds for all functions $f\in W_{0}^{1,n}(\mathbb{H}^{n})$ with $\|\nabla_{g}
f\|_{L^{n}(\mathbb{H}^{n})}\leq1$, where $\omega_{n-1}$ is the area of the surface of the unit $n$-ball in $\mathbb{H}^{n}$. Moreover, the constant $\alpha_{\beta_{1}}$ is sharp.
\end{thm}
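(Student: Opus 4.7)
The plan is to reduce Theorem \ref{analog_LT13} to the Lu--Tang weighted Trudinger--Moser inequality of Theorem \ref{Trudinger_GN_hyper_thm} by means of H\"older's inequality and the Hardy inequality of Theorem \ref{Hardy_hyper_thm}. When $\beta_1=\beta_2$, the definition forces $\delta=0$, so the desired statement coincides with Theorem \ref{Trudinger_GN_hyper_thm}. When $\delta=1$ (which is compatible with the hypotheses whenever $0\le \beta_1<n$), the right-hand side of \eqref{analog_LT13_1} collapses to $\widetilde{C_3}$, and \eqref{analog_LT13_1} is just Theorem \ref{Trudinger_hyper_thm} with any $\alpha<\alpha_{\beta_1}$. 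Hence, from now on I assume $\beta_1\neq\beta_2$ and $\delta\in(0,1)$.

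The first key step is to apply Theorem \ref{Trudinger_GN_hyper_thm} with weight parameter $\beta_1$ (which satisfies $0\le\beta_1<n$) and exponent $\alpha\in(0,\alpha_{\beta_1})$. For $f\in W_{0}^{1,n}(\mathbb{H}^{n})$ with $\|\nabla_g f\|_{L^n(\mathbb{H}^n)}\le 1$, this yields
\begin{equation*}
\int_{\mathbb{H}^{n}}\frac{1}{\rho^{\beta_{1}}}\left(\exp(\alpha|f|^{n/(n-1)})-\sum_{k=0}^{n-2}\frac{\alpha^{k}|f|^{kn/(n-1)}}{k!}\right)dV_{g}\le \widehat{C_{4}}\int_{\mathbb{H}^{n}}\frac{|f|^{n}}{\rho^{\beta_{1}}}dV_{g}.
\end{equation*}
Next, I introduce the auxiliary exponent $\beta_1':=(\beta_1-\beta_2(1-\delta))/\delta$. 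The hypothesis $0\le\beta_1-\beta_2(1-\delta)<n\delta$ is precisely the statement $0\le\beta_1'<n$, so the Hardy inequality \eqref{Hardy_hyper1} is applicable with $q=n$ and $\beta=\beta_1'$, giving
\begin{equation*}
\int_{\mathbb{H}^{n}}\frac{|f|^{n}}{\rho^{\beta_1'}}dV_{g}\le C_{2}^{n}n^{n-1}\|\nabla_g f\|_{L^n(\mathbb{H}^n)}^{n}\le C_{2}^{n}n^{n-1}.
\end{equation*}
Since $\beta_1=\delta\beta_1'+(1-\delta)\beta_2$ by construction, H\"older's inequality with conjugate exponents $1/\delta$ and $1/(1-\delta)$ factorises the $\rho^{-\beta_1}$ weight and splits $|f|^n$ accordingly:
\begin{equation*}
\int_{\mathbb{H}^{n}}\frac{|f|^{n}}{\rho^{\beta_{1}}}dV_{g}\le\left(\int_{\mathbb{H}^{n}}\frac{|f|^{n}}{\rho^{\beta_1'}}dV_{g}\right)^{\delta}\left(\int_{\mathbb{H}^{n}}\frac{|f|^{n}}{\rho^{\beta_{2}}}dV_{g}\right)^{1-\delta}.
\end{equation*}
Chaining the three displays yields \eqref{analog_LT13_1} with the explicit constant $\widetilde{C_{3}}=\widehat{C_{4}}(C_{2}^{n}n^{n-1})^{\delta}$.

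For the sharpness of $\alpha_{\beta_{1}}$, I argue by contradiction. Suppose that \eqref{analog_LT13_1} held with some $\alpha\ge\alpha_{\beta_1}$ and some finite constant $\widetilde{C_3}$. Working in the Poincar\'e ball with $x_0=0$, I use the Moser--Adachi--Tanaka--type concentrating sequence $\{\phi_k\}\subset W_0^{1,n}(\mathbb{H}^n)$ (adapted from the sharpness arguments of \cite[Theorem 1.3]{LT13} and \cite[Theorem 1.1]{Zhu15}) with $\|\nabla_g\phi_k\|_{L^n(\mathbb{H}^n)}\le 1$; these are truncated logarithmic functions supported in a shrinking geodesic ball about $x_0$. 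By the already established sharpness of $\alpha_{\beta_1}$ for the Lu--Tang inequality, the left-hand side of \eqref{analog_LT13_1} evaluated at $\phi_k$ diverges as $k\to\infty$. Direct computation in geodesic polar coordinates shows that $\int_{\mathbb{H}^n}|\phi_k|^n/\rho^{\beta_2}\,dV_g\to 0$ in the relevant range of $\beta_2$, which contradicts \eqref{analog_LT13_1}.

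The main obstacle is the sharpness step for $\beta_2$ outside the range where concentrated sequences automatically have $\int|\phi_k|^n/\rho^{\beta_2}dV_g$ bounded (in particular $\beta_2\ge n$); there the standard concentration at $x_0$ could make the right-hand side $+\infty$ and trivialise the counterexample. To handle such $\beta_2$, one either truncates $\phi_k$ away from $x_0$ at an appropriate scale (keeping the Moser-type divergence intact while taming the weighted $L^n$-norm), or adjusts the base point: scaling and translating $\phi_k$ toward a fixed interior point where $\rho$ is bounded below still realises the critical Moser divergence near that point's neighbourhood together with a finite $\int|\phi_k|^n/\rho^{\beta_2}dV_g$. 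Carrying out this truncation/translation argument uniformly over admissible $\beta_2$ is the delicate part of the proof.
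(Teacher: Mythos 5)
The existence part of your argument is correct and is essentially the paper's own proof: writing $\beta_1=\delta\beta_1'+(1-\delta)\beta_2$ with $\beta_1'=(\beta_1-\beta_2(1-\delta))/\delta\in[0,n)$, applying H\"older and then the Hardy inequality \eqref{Hardy_hyper1} with $q=n$, and inserting the outcome into the Lu--Tang inequality \eqref{Trudinger_GN_hyper1} is exactly what the paper does, only packaged through Theorem \ref{CKN_thm} with $p_{2}=p_{3}=n$, $b=-\beta_{2}/n$, $c=-\beta_{1}/n$; your constant $\widehat{C_{4}}(C_{2}^{n}n^{n-1})^{\delta}$ has the same structure as the paper's $\widetilde{C_{3}}$, and your separate treatment of $\delta=0$ and $\delta=1$ is consistent with this.

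The sharpness part, however, contains a genuine gap, and it is the only part of the theorem that is not a short consequence of the quoted results. First, your claim that the left-hand side of \eqref{analog_LT13_1} \emph{diverges} along the concentrating family ``by the already established sharpness of $\alpha_{\beta_1}$ for the Lu--Tang inequality'' does not follow: that sharpness is a statement about unboundedness of the ratio of the exponential integral to $\int_{\mathbb{H}^n}|f|^{n}\rho^{-\beta_1}\,dV_g$, not about divergence of the exponential integral itself along a fixed normalized sequence; and at the endpoint $\alpha=\alpha_{\beta_1}$, which must be included since sharpness here means failure for all $\alpha\ge\alpha_{\beta_1}$, the exponential integral along the standard Moser family stays bounded. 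The paper's mechanism is different: for the explicit Lu--Tang functions $f_j$ (truncated logarithms with the $\beta_1$-dependent normalization $j^{(n-\beta_1-1)/(n-\beta_1)}$ and factor $D_j$) one has $\|\nabla_g f_j\|_{L^n(\mathbb{H}^n)}=1$ and $\int_{\mathbb{H}^n}|f_j|^{n}\rho^{-\beta_2}\,dV_g=O(1/j)$, and a direct polar-coordinate computation on the set $\{\rho\le e^{-j}\}$ shows the left-hand side is bounded below by $1-o(1)$ for \emph{every} $\alpha\ge\alpha_{\beta_1}$; it is the vanishing of the right-hand side (raised to the power $1-\delta$), not a blow-up of the left-hand side, that makes \eqref{analog_LT13_1} fail. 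Second, you yourself concede that the decay of $\int_{\mathbb{H}^n}|\phi_k|^{n}\rho^{-\beta_2}\,dV_g$ is not established for all admissible $\beta_2$ and defer the ``truncation/translation'' fix as ``the delicate part''; as written no counterexample is actually produced, so the sharpness claim is not proved. (The concern you raise is not unreasonable---the paper itself simply quotes the $O(1/j)$ bound from \cite{LT13}---but identifying a difficulty and sketching possible remedies does not close it.) To complete the proof you should carry out the explicit computation with $f_j$ as in the paper, including a verification of $\int_{\mathbb{H}^n}|f_j|^{n}\rho^{-\beta_2}\,dV_g=O(1/j)$ in the range of $\beta_2$ you admit, or else fully execute your truncation argument.
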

\begin{rem}\label{rem_LT13} We note that the Theorem \ref{analog_LT13} implies Theorem \ref{Trudinger_GN_hyper_thm} when $\beta_{1}=\beta_{2}$.
\end{rem}
\begin{proof}[Proof of Theorem \ref{analog_LT13}] By \eqref{Trudinger_GN_hyper1} we have
\begin{multline}\label{analog_LT13_2}
\int_{\mathbb{H}^{n}}\frac{1}{\rho^{\beta_{1}}}\left(\exp(\alpha|f(x)|^{n/(n-1)})-\sum_{k=0}^{n-2}\frac{
\alpha^{k}|f(x)|^{kn/(n-1)}}{k!}\right)dV_{g}\leq \widehat{C_{4}}\int_{\mathbb{H}^{n}}
\frac{|f(x)|^{n}}{\rho^{\beta_{1}}}dV_{g}.
\end{multline}
When $0\leq\beta_{1}<n$ and $\beta_{2}\in\mathbb{R}$, we have by \eqref{CKN_1} with $p_{2}=p_{3}=n\geq2$, $b=-\beta_{2}/n$, $c=-\beta_{1}/n$ and $0\leq \beta_{1}-\beta_{2}(1-\delta)<n\delta\leq n$ that
\begin{equation}\label{analog_LT13_3}
\int_{\mathbb{H}^{n}}\frac{|f(x)|^{n}}{\rho^{\beta_{1}}}dV_{g}\leq \widehat{C_{3}}
\left(\int_{\mathbb{H}^{n}}\frac{|f(x)|^{n}}{\rho^{\beta_{2}}}dV_{g}\right)^{1-\delta}
\end{equation}
for all $f\in W_{0}^{1,n}(\mathbb{H}^{n})$ with $\|\nabla_{g}
f\|_{L^{n}(\mathbb{H}^{n})}\leq1$, where $\widehat{C_{3}}=\widehat{C_{3}}(n,\beta_{1},\beta_{2},\delta)$ is the constant from \eqref{CKN_1}. Then, by this we note that there exists a positive constant $C=C(n,\beta_{1},\beta_{2},\delta)$ such that
\begin{equation}\label{analog_LT13_3_8}
\int_{\mathbb{H}^{n}}\frac{|f(x)|^{n}}{\rho^{\beta_{1}}}dV_{g}\leq C
\left(\int_{\mathbb{H}^{n}}\frac{|f(x)|^{n}}{\rho^{\beta_{2}}}dV_{g}\right)^{1-\delta}
\end{equation}
holds for all $f\in W_{0}^{1,n}(\mathbb{H}^{n})$ with $\|\nabla_{g}
f\|_{L^{n}(\mathbb{H}^{n})}\leq1$, where $0\leq\beta_{1}<n$, $\beta_{2}\in\mathbb{R}$ and $\delta$ is given in \eqref{delta}. Then, combining \eqref{analog_LT13_2} and \eqref{analog_LT13_3_8}, we obtain \eqref{analog_LT13_1}.

Now we show the sharpness of the constant $\alpha_{\beta_{1}}$ in \eqref{analog_LT13_1}, that is, we prove that \eqref{analog_LT13_1} fails when $\alpha\geq\alpha_{\beta_{1}}$. By \cite{LT13}, we know that if we take the sequence $\{f_{j}\}_{j=1}^{\infty}\in W_{0}^{1,n}(\mathbb{H}^{n})$ as follows
\begin{equation*}
f_{j}(x):=\omega_{n-1}^{-1/n}D_{j}
\begin{cases} 0, \text{\;if}\;\rho>1;\\
j^{\frac{n-\beta_{1}-1}{n-\beta_{1}}}\left(\frac{-\ln\rho}{j}\right), \text{\;if}\;e^{-j}\leq\rho\leq1;\\
j^{\frac{n-\beta_{1}-1}{n-\beta_{1}}}, \text{\;if}\;
0\leq \rho \leq e^{-j},\end{cases}
\end{equation*}
where
$$D_{j}=\left(j^{-\frac{n}{n-\beta_{1}}}\int_{e^{-j}}^{1}\rho^{-n}(\sinh \rho)^{n-1}d\rho\right)^{-\frac{1}{n}},$$
then we have
$$D_{j}j^{-\frac{\beta_{1}}{n(n-\beta_{1})}}\rightarrow 1\;\;\text\;\;\text{as}\;\;j\rightarrow\infty,$$
$$\int_{\mathbb{H}^{n}}|\nabla_{g}f_{j}(x)|^{n}dV_{g}=1\;\;\text{and}\;\;\int_{\mathbb{H}^{n}}
\frac{|f_{j}(x)|^{n}}{\rho^{\beta_{2}}}dV_{g}=O\left(\frac{1}{j}\right).$$
Plugging $f_{j}(x)$ into the left hand side of \eqref{analog_LT13_1} and using the polar coordinates \eqref{polar}, we calculate
$$
\int_{\mathbb{H}^{n}}\frac{1}{\rho^{\beta_{1}}}\left(\exp(\alpha|f_{j}(x)|^{n/(n-1)})-\sum_{k=0}^{n-2}\frac{
\alpha^{k}|f_{j}(x)|^{kn/(n-1)}}{k!}\right)dV_{g}$$
$$=\int_{\mathbb{H}^{n}}\frac{1}{\rho^{\beta_{1}}}\sum_{k=n-1}^{\infty}\frac{
\alpha^{k}|f_{j}(x)|^{kn/(n-1)}}{k!}dV_{g}$$
$$ \geq \int_{\rho\leq e^{-j}}\frac{1}{\rho^{\beta_{1}}}\sum_{k=n-1}^{\infty}\frac{
\alpha^{k}|f_{j}(x)|^{kn/(n-1)}}{k!}dV_{g}$$
$$=\omega_{n-1}\sum_{k=n-1}^{\infty}\frac{
\alpha^{k}\left(\omega_{n-1}^{-1/n}D_{j}j^{\frac{n-\beta_{1}-1}{n-\beta_{1}}}\right)^{kn/(n-1)}}{k!}
\int_{0}^{e^{-j}}\frac{(\sinh \rho)^{n-1}}{\rho^{\beta_{1}}}d\rho$$
$$ \sim
\sum_{k=n-1}^{\infty}\frac{
\alpha^{k}\left(\omega_{n-1}^{-1/n}D_{j}j^{\frac{n-\beta_{1}-1}{n-\beta_{1}}}\right)^{kn/(n-1)}}{k!}
e^{-j(n-\beta_{1})}$$
$$
\sim \sum_{k=n-1}^{\infty}
\frac{\left(\frac{\alpha}{\omega_{n-1}^{1/(n-1)}}\right)^{k}\left(j^{\frac{\beta_{1}}{n(n-\beta_{1})}+
\frac{n-\beta_{1}-1}{n-\beta_{1}}}\right)^{\frac{kn}{n-1}}}{k!}
e^{-j(n-\beta_{1})}$$
$$ \sim
\sum_{k=n-1}^{\infty}
\frac{\left(\frac{\alpha jn}{n\omega_{n-1}^{1/(n-1)}}\right)^{k}}{k!}
e^{-j(n-\beta_{1})}$$
$$ \geq
\sum_{k=n-1}^{\infty}
\frac{j^{k}(n-\beta_{1})^{k}}{k!}
e^{-j(n-\beta_{1})}$$
$$=
1-\sum_{k=0}^{n-2}
\frac{j^{k}(n-\beta_{1})^{k}}{k!}
e^{-j(n-\beta_{1})}.
$$
Thus we obtain that
\begin{multline*}
\frac{\int_{\mathbb{H}^{n}}\frac{1}{\rho^{\beta_{1}}}\left(\exp(\alpha|f_{j}(x)|^{n/(n-1)})-\sum_{k=0}^{n-2}\frac{
\alpha^{k}|f_{j}(x)|^{kn/(n-1)}}{k!}\right)dV_{g}}
{\left(\int_{\mathbb{H}^{n}}
\frac{|f(x)|^{n}}{\rho^{\beta_{2}}}dV_{g}\right)^{1-\delta}}\\ \geq\frac{1-\sum_{k=0}^{n-2}
\frac{j^{k}(n-\beta_{1})^{k}}{k!}
e^{-j(n-\beta_{1})}}{O\left(\frac{1}{j}\right)}\rightarrow\infty
\end{multline*}
as $j\rightarrow\infty$.
\end{proof}

We now show that this is equivalent to the following Caffarelli-Kohn-Nirenberg type inequalities.
\begin{thm}\label{GN_hyper_thm}
Let $\mathbb{H}^{n}\;(n\geq2)$ be the $n$-dimensional hyperbolic space. Let $0\leq\beta_{1}<n$ and $\beta_{2}\in \mathbb{R}$. Let $\delta$ be as in \eqref{delta}. Then for any $n\leq q<\infty$ there exists a positive constant $C_{3}=C_{3}(n,\beta_{1},\beta_{2},q,\delta)$ such that
\begin{equation}\label{GN_hyper1}
\left\|\frac{f}{\rho^{\frac{\beta_{1}}{q}}}\right\|_{L^{q}(\mathbb{H}^{n})}\leq C_{3}q^{1-1/n}\|\nabla_{g}
f\|_{L^{n}(\mathbb{H}^{n})}^{1-\frac{n(1-\delta)}{q}}
\left\|\frac{f}{\rho^{\frac{\beta_{2}}{n}}}\right\|_{L^{n}(\mathbb{H}^{n})}^{\frac{n(1-\delta)}{q}}
\end{equation}
holds for all functions $f\in W_{0}^{1,n}(\mathbb{H}^{n})$. Moreover, we have
\begin{equation}\label{equiv_identity_GN_hyper}
\frac{1}{\alpha_{\beta_{1}} n'e}=A_{3}^{n'}=B_{3}^{n'},
\end{equation}
where
$$\alpha_{\beta_{1}}=n\omega_{n-1}^{1/(n-1)}(1-\beta_{1}/n),$$
\begin{equation*}
\begin{split}
A_{3}=\inf\{C_{3}>0; \exists r=r(n,\beta_{1},&\beta_{2},C_{3}) \textrm{ with } r\geq n:\\& \eqref{GN_hyper1}\textrm{ holds }\forall f\in
W_{0}^{1,n}(\mathbb{H}^{n}),
\forall q
\textrm{ with } r\leq q<\infty\},
\end{split}
\end{equation*}
\begin{equation}\label{alphaDF_GN}
B_{3}=\limsup_{q\rightarrow \infty}\sup_{f\in W^{1,n}(\mathbb{H}^{n})\backslash\{0\}}
\frac{\left\|\frac{f}{\rho^{\frac{\beta_{1}}{q}}}\right\|_{L^{q}(\mathbb{H}^{n})}}{q^{1-1/n}\|\nabla_{g}
f\|_{L^{n}(\mathbb{H}^{n})}^{1-\frac{n(1-\delta)}{q}}
\left\|\frac{f}{\rho^{\frac{\beta_{2}}{n}}}\right\|_{L^{n}(\mathbb{H}^{n})}^{\frac{n(1-\delta)}{q}}}.
\end{equation}
The weighted Trudinger-Moser inequalities \eqref{analog_LT13_1} are equivalent to the Caffarelli-Kohn-Nirenberg type inequalities \eqref{GN_hyper1} with relation \eqref{equiv_identity_GN_hyper}.
\end{thm}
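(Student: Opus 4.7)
The strategy mirrors that of Theorem~\ref{Hardy_hyper_thm}, with the extra weighted factor $\|f/\rho^{\beta_2/n}\|_{L^{n}(\mathbb{H}^{n})}^{n(1-\delta)/q}$ tracked through the argument. Directly from the definitions one has $B_{3}\leq A_{3}$, so \eqref{equiv_identity_GN_hyper} together with the claimed equivalence will follow once I establish the two implications: (i)~\eqref{analog_LT13_1}$\Rightarrow$\eqref{GN_hyper1} with $\alpha_{\beta_{1}}\leq (en'A_{3}^{n'})^{-1}$, and (ii)~\eqref{GN_hyper1}$\Rightarrow$\eqref{analog_LT13_1} with $1/\alpha_{\beta_{1}}\leq n'eB_{3}^{n'}$.

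For (i), I plan to rescale $f$ by $\|\nabla_{g} f\|_{L^{n}(\mathbb{H}^{n})}$ in \eqref{analog_LT13_1} with $\alpha=\alpha_{\beta_{1}}-\varepsilon$ and then isolate the $k$-th term of the exponential series on the left. Because the right-hand side of \eqref{analog_LT13_1} then contributes a factor $\|\nabla_{g} f\|_{L^{n}}^{-n(1-\delta)}\|f/\rho^{\beta_{2}/n}\|_{L^{n}}^{n(1-\delta)}$, this produces
\begin{equation*}
\left\|\frac{f}{\rho^{\beta_{1}/(kn')}}\right\|_{L^{kn'}(\mathbb{H}^{n})}\leq (C_{\varepsilon}k!)^{1/(kn')}(\alpha_{\beta_{1}}-\varepsilon)^{-1/n'}\|\nabla_{g} f\|_{L^{n}(\mathbb{H}^{n})}^{1-\frac{n(1-\delta)}{kn'}}\left\|\frac{f}{\rho^{\beta_{2}/n}}\right\|_{L^{n}(\mathbb{H}^{n})}^{\frac{n(1-\delta)}{kn'}}
\end{equation*}
for each $k\geq n-1$. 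For general $q\geq n$, I will pick $k\geq n-1$ with $n'k\leq q<n'(k+1)$ and interpolate via H\"older's inequality as in \eqref{Hardy_hyper41}, with exponents satisfying $\theta q/(n'k)+(1-\theta)q/(n'(k+1))=1$. The identity $\theta/k+(1-\theta)/(k+1)=n'/q$, which is equivalent to this constraint, ensures that the interpolated powers on $\|\nabla_{g} f\|_{L^{n}}$ and $\|f/\rho^{\beta_{2}/n}\|_{L^{n}}$ collapse to the desired $1-n(1-\delta)/q$ and $n(1-\delta)/q$ appearing in \eqref{GN_hyper1}. Bounding $(k+1)!\leq\Gamma(q/n'+2)$ and applying the Stirling expansion \eqref{Gamma1} turns the asymptotic constant into $(n'e(\alpha_{\beta_{1}}-\varepsilon))^{-1/n'}+o(1)$ as $q\to\infty$, which gives $\alpha_{\beta_{1}}\leq (en'A_{3}^{n'})^{-1}$ after letting $\varepsilon\to 0$.

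For (ii), I will plug \eqref{GN_hyper1} termwise into the Taylor series on the left of \eqref{analog_LT13_1} at $q=kn'$. The identity $(1-1/n)n'=1$ reduces $(C_{3}(kn')^{1-1/n})^{kn'}$ to $C_{3}^{kn'}(kn')^{k}$, and $\|\nabla_{g} f\|_{L^{n}(\mathbb{H}^{n})}\leq 1$ combined with $kn'-n(1-\delta)\geq 0$ for $k\geq n-1$ allows the $\|\nabla_{g} f\|_{L^{n}}$-factor to be dropped. Summing produces
\begin{equation*}
\int_{\mathbb{H}^{n}}\frac{1}{\rho^{\beta_{1}}}\left(\exp(\alpha|f(x)|^{n'})-\sum_{k=0}^{n-2}\frac{(\alpha|f(x)|^{n'})^{k}}{k!}\right)dV_{g}\leq \left\|\frac{f}{\rho^{\beta_{2}/n}}\right\|_{L^{n}(\mathbb{H}^{n})}^{n(1-\delta)}\sum_{k=n-1}^{\infty}\frac{(\alpha n'kC_{3}^{n'})^{k}}{k!},
\end{equation*}
whose series converges precisely when $0\leq\alpha<1/(n'eC_{3}^{n'})$ by Stirling. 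Passing to the infimum $C_{3}\to B_{3}$ yields $\alpha_{\beta_{1}}\geq 1/(n'eB_{3}^{n'})$.

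The main technical obstacle is the exponent cancellation in step (i): I must verify that H\"older interpolation between the levels $n'k$ and $n'(k+1)$ delivers powers on $\|\nabla_{g} f\|_{L^{n}}$ and on $\|f/\rho^{\beta_{2}/n}\|_{L^{n}}$ that match \eqref{GN_hyper1} exactly and depend only on $q$, not separately on $k$ and $\theta$. This hinges on the identity linking $\theta$ to $q,k,n'$ noted above. Once this is checked, the remaining steps run parallel to those of Theorem~\ref{Hardy_hyper_thm}, and the asymptotic sharpness of $B_{3}$ claimed in Remark~\ref{rem_B3} is an immediate consequence of \eqref{equiv_identity_GN_hyper}.
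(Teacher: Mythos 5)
Your proposal is correct and follows essentially the same route as the paper's proof: rescaling $f$ by $\|\nabla_{g}f\|_{L^{n}(\mathbb{H}^{n})}$ in \eqref{analog_LT13_1}, extracting the $k$-th term to get the $L^{kn'}$ bound, H\"older interpolation between the levels $n'k$ and $n'(k+1)$ (your exponent identity $\theta/k+(1-\theta)/(k+1)=n'/q$ is exactly what makes the powers collapse to $1-n(1-\delta)/q$ and $n(1-\delta)/q$, as in the paper), then $(k+1)!\leq\Gamma(q/n'+2)$ with Stirling for the bound on $A_{3}$, and termwise substitution of \eqref{GN_hyper1} at $q=kn'$ with $(1-1/n)n'=1$ for the converse bound on $B_{3}$. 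No gaps.
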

\begin{rem}\label{rem_B3} An analogue of Remark \ref{rem_B1} holds, in particular, $B_{3}$ is asymptotically sharp for \eqref{GN_hyper1}.
\end{rem}
\begin{rem} In \cite{INW14}, similar inequalities to \eqref{analog_LT13_1} and \eqref{GN_hyper1} are investigated for radially symmetric functions in $\Rn$.
\end{rem}
\begin{proof}[Proof of Theorem \ref{GN_hyper_thm}] Since $B_{3}\leq A_{3}$, then, as in the proof of Theorem \ref{Hardy_hyper_thm}, we show the following two cases: \eqref{analog_LT13_1}$\Rightarrow$\eqref{GN_hyper1} with $\alpha_{\beta_{1}}\leq(en'A_{3}^{n'})^{-1}$ and
\eqref{GN_hyper1}$\Rightarrow$\eqref{analog_LT13_1} with $1/\alpha_{\beta_{1}}\leq n'eB_{3}^{n'}$. So, we start to show \eqref{analog_LT13_1}$\Rightarrow$\eqref{GN_hyper1} with $\alpha_{\beta_{1}}\leq(en'A_{3}^{n'})^{-1}$. In the case $\|\nabla_{g} f\|_{L^{n}(\mathbb{H}^{n})}=0$ we have $f\equiv0$ by taking into account the definition of $f\in W_{0}^{1,n}(\mathbb{H}^{n})$, so there is nothing to prove. Therefore, we can assume that $\|\nabla_{g} f\|_{L^{n}(\mathbb{H}^{n})}\neq0$. Replacing $f$ by $f/\|\nabla_{g} f\|_{L^{n}(\mathbb{H}^{n})}$ in \eqref{analog_LT13_1} we get
\begin{equation}\label{GN_hyper3_0}
\int_{\mathbb{H}^{n}}\frac{1}{\rho^{\beta_{1}}}\sum_{k=n-1}^{\infty}
\frac{\alpha^{k}|f(x)|^{kn'}}{k!}dV_{g}\leq \widetilde{C_{3}}
\|\nabla_{g} f\|_{L^{n}(\mathbb{H}^{n})}^{kn'-n(1-\delta)}\left(\int_{\mathbb{H}^{n}}
\frac{|f(x)|^{n}}{\rho^{\beta_{2}}}dV_{g}\right)^{1-\delta}.
\end{equation}
It implies that for any $\varepsilon$ with $0<\varepsilon<\alpha_{\beta_{1}}$ there exists $C_{\varepsilon}$ such that
\begin{equation}\label{GN_hyper3}
\int_{\mathbb{H}^{n}}\frac{1}{\rho^{\beta_{1}}}\sum_{k=n-1}^{\infty}
\frac{(\alpha_{\beta_{1}}-\varepsilon)^{k}|f(x)|^{kn'}}{k!}dV_{g}\leq C_{\varepsilon}
\|\nabla_{g} f\|_{L^{n}(\mathbb{H}^{n})}^{kn'-n(1-\delta)}\left(\int_{\mathbb{H}^{n}}
\frac{|f(x)|^{n}}{\rho^{\beta_{2}}}dV_{g}\right)^{1-\delta}.
\end{equation}
In particular, we get from this that
\begin{equation}\label{GN_hyper4}
\left\|\frac{f}{\rho^{\frac{\beta_{1}}{kn'}}}\right\|_{L^{kn'}(\mathbb{H}^{n})}\leq (C_{\varepsilon}k!)^{1/(kn')}
(\alpha_{\beta_{1}}-\varepsilon)^{-1/n'}\|\nabla_{g}
f\|_{L^{n}(\mathbb{H}^{n})}^{1-\frac{n(1-\delta)}{n'k}}
\left\|\frac{f}{\rho^{\frac{\beta_{2}}{n}}}\right\|_{L^{n}(\mathbb{H}^{n})}^{\frac{n(1-\delta)}{n'k}}
\end{equation}
for all $k\geq n-1$. Moreover, for any $q\geq n$, there exists an integer $k\geq n-1$ satisfying $n'k\leq q <n'(k+1)$. Then, combining \eqref{Hardy_hyper5} with \eqref{GN_hyper4}, one gets
\begin{equation}\label{GN_hyper6}
\left\|\frac{f}{\rho^{\frac{\beta_{1}}{q}}}\right\|_{L^{q}(\mathbb{H}^{n})}\leq
C_{\varepsilon}^{\frac{1}{q}}(\alpha_{\beta_{1}}-\varepsilon)^{-\frac{1}{n'}}
((k+1)!)^{\frac{1}{q}}\|\nabla_{g} f\|_{L^{n}(\mathbb{H}^{n})}^{1-\frac{n(1-\delta)}{q}}
\left\|\frac{f}{\rho^{\frac{\beta_{2}}{n}}}\right\|_{L^{n}(\mathbb{H}^{n})}^{\frac{n(1-\delta)}{q}}.
\end{equation}
Since $(k+1)!\leq \Gamma(q/n'+2)$ for $q\geq n'k$, we rewrite \eqref{GN_hyper6} as
\begin{equation}\label{GN_hyper7}
\left\|\frac{f}{\rho^{\frac{\beta_{1}}{q}}
}\right\|_{L^{q}(\mathbb{H}^{n})}\leq
(C_{\varepsilon}\Gamma(q/n'+2))^{1/q}(\alpha_{\beta_{1}}-\varepsilon)^{-1/n'}\|\nabla_{g} f\|_{L^{n}(\mathbb{H}^{n})}^{1-\frac{n(1-\delta)}{q}}
\left\|\frac{f}{\rho^{\frac{\beta_{2}}{n}}}\right\|_{L^{n}(\mathbb{H}^{n})}^{\frac{n(1-\delta)}{q}}
\end{equation}
for any $q\geq n$ and for all $f\in W_{0}^{1,n}(\mathbb{H}^{n})$, which is \eqref{GN_hyper1}. With the help of \eqref{Gamma1} for $q\rightarrow+\infty$ and \eqref{GN_hyper7}, we know that for any $\delta>0$ there exists $r\geq n$ such that
\begin{equation}\label{GN_hyper8}
\left\|\frac{f}{\rho^{\frac{\beta_{1}}{q}}
}\right\|_{L^{q}(\mathbb{H}^{n})}\leq
((n'e(\alpha_{\beta_{1}}-\varepsilon))^{-1/n'}+\delta)q^{1-1/n}\|\nabla_{g} f\|_{L^{n}(\mathbb{H}^{n})}^{1-\frac{n(1-\delta)}{q}}
\left\|\frac{f}{\rho^{\frac{\beta_{2}}{n}}}\right\|_{L^{n}(\mathbb{H}^{n})}^{\frac{n(1-\delta)}{q}}
\end{equation}
holds for all $f\in W_{0}^{1,n}(\mathbb{H}^{n})$ and all $q$ with $r\leq q<\infty$.

Thus, we note that $A_{3}\leq (n'e(\alpha_{\beta_{1}}-\varepsilon))^{-1/n'}+\delta$, then by the arbitrariness of $\varepsilon$ and
$\delta$ we obtain $\alpha_{\beta_{1}}\leq(en'A_{3}^{n'})^{-1}$.

Now let us show that \eqref{GN_hyper1}$\Rightarrow$\eqref{analog_LT13_1} with $1/\alpha_{\beta_{1}}\leq n'eB_{3}^{n'}$. By
\eqref{GN_hyper1}, for any $q$ with $n\leq q<\infty$ there exists $C_{3}=C_{3}(n,\beta_{1},\beta_{2},q)>0$ such that
\begin{equation}\label{GN_hyper9}
\left\|\frac{f}{\rho^{\frac{\beta_{1}}{q}}}\right\|_{L^{q}(\mathbb{H}^{n})}\leq C_{3}q^{1-1/n}\|\nabla_{g}
f\|_{L^{n}(\mathbb{H}^{n})}^{1-\frac{n(1-\delta)}{q}}
\left\|\frac{f}{\rho^{\frac{\beta_{2}}{n}}}\right\|_{L^{n}(\mathbb{H}^{n})}^{\frac{n(1-\delta)}{q}}
\end{equation}
holds for all $f\in W_{0}^{1,n}(\mathbb{H}^{n})$.
Using this and $\|\nabla_{g} f\|_{L^{n}(\mathbb{H}^{n})}\leq1$, we arrive at
\begin{multline}\label{GN_hyper10}
\int_{\mathbb{H}^{n}}\frac{1}{\rho^{\beta_{1}}}\left(\exp(\alpha|f(x)|^{n'})-\sum_{k=0}^{n-2}
\frac{(\alpha|f(x)|^{n'})^{k}}{k!}\right)dV_{g}\\ \leq
\sum_{n'k\geq n,\;k\in\mathbb{N}}\frac{(\alpha
n'kC_{3}^{n'})^{k}}{k!}\left(\int_{\mathbb{H}^{n}}
\frac{|f(x)|^{n}}{\rho^{\beta_{2}}}dV_{g}\right)^{1-\delta}.
\end{multline}
The series in the right hand side of \eqref{GN_hyper10} converges when $0\leq\alpha<1/(n'eC_{3}^{n'})$. Thus, we have obtained \eqref{analog_LT13_1} with $0\leq\alpha<1/(n'eC_{3}^{n'})$. Hence
$\alpha_{\beta_{1}}\geq 1/(n'eC_{3}^{n'})$ for all $C_{3}\geq B_{3}$, which gives $\alpha_{\beta_{1}}\geq 1/(n'eB_{3})^{n'}$.

Thus, we have completed the proof of Theorem \ref{GN_hyper_thm}.
\end{proof}
We now recall another version of the weighted Trudinger-Moser inequality with a more explicit expression for the exponent for radially decreasing functions.

\begin{thm}[{\cite[Theorem 1]{LC17}}]
\label{Trudinger_GN2_hyper_thm}
Let $\mathbb{H}^{n}\;(n\geq2)$ be the $n$-dimensional hyperbolic space. Let $0\leq\beta<n$ and let $0<\alpha\leq \alpha_{\beta}$ with $\alpha_{\beta}=n\omega_{n-1}^{1/(n-1)}(1-\beta/n)$. Then there exists a positive constant $\widetilde{C_{4}}=\widetilde{C_{4}}(\beta, n, \alpha)$ such that
\begin{multline}\label{Trudinger_GN2_hyper1}
\int_{\mathbb{H}^{n}}\frac{1}{(1+|f(x)|)^{n/(n-1)}\rho^{\beta}}\left(\exp(\alpha|f(x)|^{n/(n-1)})\right.\\\left.-\sum_{k=0}^{n-2}\frac{
\alpha^{k}|f(x)|^{kn/(n-1)}}{k!}\right)dV_{g}\leq \widetilde{C_{4}}\int_{\mathbb{H}^{n}}
\frac{|f(x)|^{n}}{\rho^{\beta}}dV_{g}
\end{multline}
holds for all radially decreasing functions $f\in
W_{0}^{1,n}(\mathbb{H}^{n})$ with $\|\nabla_{g} f\|_{L^{n}(\mathbb{H}^{n})}\leq1$, where $\omega_{n-1}$ is the area of the surface of the unit $n$-ball in $\mathbb{H}^{n}$. Moreover, the power $n/(n-1)$ in the denominator is sharp.
\end{thm}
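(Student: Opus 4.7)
The plan is to exploit the radial monotonicity together with the polar-coordinate formula \eqref{polar} to reduce the problem to one dimension, and then split the integration by the size of $|f|$ so as to take advantage of the extra denominator $(1+|f|)^{n/(n-1)}$.

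First I would write $f(x)=f(\rho)$ with $f$ nonincreasing on $[0,\infty)$, so that $\|\nabla_g f\|_{L^n(\mathbb{H}^n)}^n=\omega_{n-1}\int_0^\infty|f'(\rho)|^n(\sinh\rho)^{n-1}d\rho\le 1$ and both sides of \eqref{Trudinger_GN2_hyper1} become weighted radial integrals. I would split the LHS along the level set $\{|f|\le 1\}$ versus $\{|f|>1\}$. On $\{|f|\le 1\}$ the expansion $\exp(\alpha t^{n/(n-1)})-\sum_{k=0}^{n-2}\alpha^k t^{kn/(n-1)}/k!$ begins with the term $\alpha^{n-1}t^n/(n-1)!$, so (even discarding the harmless factor $(1+|f|)^{n/(n-1)}\ge 1$) the integrand is pointwise bounded by $C|f|^n/\rho^\beta$ and its contribution is already absorbed by the RHS.

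On $\{|f|>1\}$, which by radial monotonicity is a geodesic ball $B_R\subset\mathbb{H}^n$, the key pointwise estimate is
\begin{equation*}
\frac{1}{(1+|f|)^{n/(n-1)}}\Bigl(\exp(\alpha|f|^{n/(n-1)})-\sum_{k=0}^{n-2}\tfrac{\alpha^{k}|f|^{kn/(n-1)}}{k!}\Bigr)\le \frac{C\exp(\alpha|f|^{n/(n-1)})}{|f|^{n/(n-1)}},
\end{equation*}
and the gain of a factor $|f|^{-n/(n-1)}$ is precisely what makes the critical choice $\alpha=\alpha_\beta$ admissible. I would handle the resulting integral via a layer-cake argument on the $\rho^{-\beta}$-weighted distribution $\nu(s)=\int_{\{|f|>s\}}\rho^{-\beta}dV_g$: writing the integral as $-\int_1^{\|f\|_\infty}s^{-n/(n-1)}\exp(\alpha s^{n/(n-1)})\,d\nu(s)$, integrating by parts in $s$, and estimating $\nu(s)$ on each level by applying the subcritical Moser-Trudinger inequality \eqref{Trudinger_GN_hyper1} to the truncations $\min(|f|,s)$. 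The $|f|^{-n/(n-1)}$ weight then plays the role of an integrating factor that tames the exponential and converts the estimate into a bound by $C\int_{\mathbb{H}^n}|f|^n/\rho^\beta\, dV_g$.

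For the sharpness of the power $n/(n-1)$ I would reuse the Moser sequence $\{f_j\}$ from the sharpness proof of Theorem \ref{analog_LT13}, replacing the exponent $n/(n-1)$ in the denominator by some $s<n/(n-1)$ and computing in polar coordinates that on the core region $\{\rho\le e^{-j}\}$ the modified LHS grows with a positive power of $j$ while the quotient against $\int|f_j|^n/\rho^\beta dV_g$ diverges as $j\to\infty$. The main obstacle will be the layer-cake step at the critical exponent $\alpha=\alpha_\beta$: the subcritical Moser-Trudinger inequality on $B_R$ only controls the regime $\alpha'<\alpha_\beta$, and closing the gap to equality requires either a Carleson-Chang-type concentration lemma tuned to the hyperbolic volume factor $(\sinh\rho)^{n-1}$ rather than the Euclidean $\rho^{n-1}$, or a careful one-dimensional rearrangement that directly exploits the $|f|^{-n/(n-1)}$ weight as an integrating factor for $\exp(\alpha|f|^{n/(n-1)})$.
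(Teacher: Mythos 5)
You are attempting to prove a statement the paper itself does not prove: Theorem \ref{Trudinger_GN2_hyper_thm} is quoted verbatim from \cite[Theorem 1]{LC17} (Liu--Chen), and the paper only uses it — it imports the sharpness of the power $n/(n-1)$ into Theorem \ref{Trudinger_GN2_hyper_analog_thm}, whose own proof handles only the strictly subcritical range $0<\alpha<\alpha_{\beta}$ via the equivalence with the Caffarelli--Kohn--Nirenberg inequality \eqref{GN3_hyper}. So there is no in-paper argument to match; your proposal has to stand as a self-contained proof of the Liu--Chen result, and it does not.

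The gap is exactly where you flag it, and it is not a technical loose end but the entire content of the theorem. Your preliminary reductions are fine: the region $\{|f|\le1\}$ is trivially absorbed, and on $\{|f|>1\}$ the denominator yields the pointwise gain $|f|^{-n/(n-1)}$. But for $\alpha<\alpha_{\beta}$ strictly the conclusion already follows from \eqref{Trudinger_GN_hyper1} without any denominator, so the theorem is really the endpoint case $\alpha=\alpha_{\beta}$ (an ``exact growth'' inequality), and your proposed mechanism for it — layer-cake on $\nu(s)=\int_{\{|f|>s\}}\rho^{-\beta}dV_{g}$ with $\nu(s)$ estimated by applying the subcritical inequality \eqref{Trudinger_GN_hyper1} to the truncations $\min(|f|,s)$ — cannot close. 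Such truncation estimates give decay of $\nu(s)$ of the order $e^{-\alpha' s^{n/(n-1)}}$ only for $\alpha'<\alpha_{\beta}$ (after also paying for the Dirichlet norm consumed by the truncation), and integrating $s^{-n/(n-1)}e^{\alpha_{\beta}s^{n/(n-1)}}$ against them leaves an unabsorbed factor $e^{(\alpha_{\beta}-\alpha')s^{n/(n-1)}}$: the polynomial gain $s^{-n/(n-1)}$ cannot beat an exponential deficit. The known proofs (Lu--Tang \cite{LT16}, Liu--Chen \cite{LC17}, following Masmoudi--Sani type arguments) instead run a delicate one-dimensional analysis of the radial profile — splitting at a radius where $f$ reaches a prescribed size, a Leckband/Adams-type sequence lemma, and a separate treatment of the concentration region with the hyperbolic weight $(\sinh\rho)^{n-1}$ — to get the sharp distribution bound at the critical exponent itself. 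Saying that one needs ``either a Carleson--Chang-type concentration lemma or a careful one-dimensional rearrangement'' names the missing ingredient rather than supplying it. The sharpness part of your sketch (Moser sequence with a smaller power in the denominator) is plausible and standard, but as it stands the main inequality at $\alpha=\alpha_{\beta}$ is unproven.
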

\begin{rem}\label{rem_LT16} In \cite[Theorem 1.1]{LT16}, the authors proved that the constant $\alpha_{\beta}$ in \eqref{Trudinger_GN2_hyper1} is sharp when $\beta=0$ for all functions, not necessarily being radially decreasing.
\end{rem}
Let us show that actually Theorem \ref{Trudinger_GN2_hyper_thm} holds for any function $f\in W_{0}^{1,n}(\mathbb{H}^{n})$ dropping the radial assumption.
\begin{thm}\label{Trudinger_GN2_hyper_analog_thm}
Let $\mathbb{H}^{n}\;(n\geq2)$ be the $n$-dimensional hyperbolic space. Let $0\leq\beta<n$ and let $0<\alpha<\alpha_{\beta}$ with $\alpha_{\beta}=n\omega_{n-1}^{1/(n-1)}(1-\beta/n)$. Then there exists a positive constant $\widetilde{C_{5}}=\widetilde{C_{5}}(\beta, n, \alpha)$ such that
\begin{multline}\label{Trudinger_GN2_hyper1_analog}
\int_{\mathbb{H}^{n}}\frac{1}{(1+|f(x)|)^{n/(n-1)}\rho^{\beta}}\left(\exp(\alpha|f(x)|^{n/(n-1)})\right.\\\left.-\sum_{k=0}^{n-2}\frac{
\alpha^{k}|f(x)|^{kn/(n-1)}}{k!}\right)dV_{g}\leq \widetilde{C_{5}}\int_{\mathbb{H}^{n}}
\frac{|f(x)|^{n}}{\rho^{\beta}}dV_{g}
\end{multline}
holds for all functions $f\in
W_{0}^{1,n}(\mathbb{H}^{n})$ with $\|\nabla_{g} f\|_{L^{n}(\mathbb{H}^{n})}\leq1$, where $\omega_{n-1}$ is the area of the surface of the unit $n$-ball in $\mathbb{H}^{n}$. Moreover, the power $n/(n-1)$ in the denominator is sharp.
\end{thm}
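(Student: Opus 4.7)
The plan is to derive \eqref{Trudinger_GN2_hyper1_analog} as a direct consequence of Theorem \ref{Trudinger_GN_hyper_thm}. The decisive observation is that the nonnegative factor $(1+|f(x)|)^{-n/(n-1)}$ is bounded above by $1$, so the integrand on the left-hand side of \eqref{Trudinger_GN2_hyper1_analog} is pointwise dominated by the integrand on the left-hand side of \eqref{Trudinger_GN_hyper1}. Since the hypotheses $0<\alpha<\alpha_{\beta}$, $0\leq\beta<n$, and $\|\nabla_{g}f\|_{L^{n}(\mathbb{H}^{n})}\leq 1$ coincide in both statements, I would invoke Theorem \ref{Trudinger_GN_hyper_thm} with the same $\alpha$ and $\beta$, which yields
$$\int_{\mathbb{H}^{n}}\frac{1}{(1+|f|)^{n/(n-1)}\rho^{\beta}}\left(\exp(\alpha|f|^{n/(n-1)})-\sum_{k=0}^{n-2}\frac{\alpha^{k}|f|^{kn/(n-1)}}{k!}\right)dV_{g}\leq \widehat{C_{4}}\int_{\mathbb{H}^{n}}\frac{|f|^{n}}{\rho^{\beta}}dV_{g},$$
so \eqref{Trudinger_GN2_hyper1_analog} holds with $\widetilde{C_{5}}=\widehat{C_{4}}(n,\alpha,\beta)$.

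For the sharpness of the denominator exponent $n/(n-1)$, I would appeal to the sharpness statement in Theorem \ref{Trudinger_GN2_hyper_thm}, which in \cite{LC17} is established via an extremal sequence of radially decreasing functions (a hyperbolic version of the classical Moser sequence, similar in spirit to the sequence $\{f_j\}$ written down in the proof of Theorem \ref{analog_LT13}). Since radially decreasing functions sit inside $W_{0}^{1,n}(\mathbb{H}^{n})$, the same sequence obstructs \eqref{Trudinger_GN2_hyper1_analog} if $n/(n-1)$ is replaced by any smaller exponent $p$: one tests the modified inequality against this sequence and observes that, as $\alpha\uparrow\alpha_{\beta}$, the ratio of the left-hand to right-hand side diverges, so no constant $\widetilde{C_{5}}$ can work uniformly.

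The inequality part is essentially a free consequence of the Lam--Tang bound in Theorem \ref{Trudinger_GN_hyper_thm}, so I do not expect a genuine obstacle there. The only delicate point is interpreting the sharpness claim in the strict regime $\alpha<\alpha_{\beta}$, since at any fixed subcritical $\alpha$ even the unweighted Theorem \ref{Trudinger_GN_hyper_thm} already produces a finite bound with no $(1+|f|)$ factor at all. The correct reading, which I would emphasize, is that $n/(n-1)$ is the unique denominator exponent for which the bound persists up to the critical threshold $\alpha=\alpha_{\beta}$, a property inherited directly from the radial sharpness of Theorem \ref{Trudinger_GN2_hyper_thm} together with the inclusion of radially decreasing functions into the general Sobolev class.
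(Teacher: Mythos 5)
Your argument is correct, and for the inequality part it takes a more direct route than the paper. You simply observe that $(1+|f|)^{-n/(n-1)}\leq 1$ and that the bracketed series $\sum_{k\geq n-1}\alpha^{k}|f|^{kn/(n-1)}/k!$ is nonnegative, so the left-hand side of \eqref{Trudinger_GN2_hyper1_analog} is pointwise dominated by the left-hand side of \eqref{Trudinger_GN_hyper1}, and the Lu--Tang bound of Theorem \ref{Trudinger_GN_hyper_thm} (stated for general, not radial, $f$ and exactly in the subcritical range $0<\alpha<\alpha_{\beta}$) finishes the proof with $\widetilde{C_{5}}=\widehat{C_{4}}$. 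The paper instead stays inside its own chain of equivalences: it applies its Caffarelli--Kohn--Nirenberg inequality \eqref{GN_hyper1} (Theorem \ref{GN_hyper_thm} with $\beta_{1}=\beta_{2}=\beta$, so $\delta=0$), expands the exponential, sums the resulting series term by term, and uses the identification $B_{3}=(\alpha_{\beta}n'e)^{-1/n'}$ from Remark \ref{rem_B3} to see that the series converges precisely for $0\leq\alpha<\alpha_{\beta}$. Your proof is shorter and makes explicit the point you raise at the end, namely that in the strictly subcritical regime the factor $(1+|f|)^{n/(n-1)}$ is not needed for the bound at all; the paper's derivation has the advantage of exhibiting the theorem as one direction of the equivalence with \eqref{GN3_hyper} that it establishes immediately afterwards in Theorem \ref{GN3_hyper_thm}, with a constant expressed through the asymptotically sharp $B_{3}$. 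For the sharpness of the exponent $n/(n-1)$ you argue exactly as the paper does: the claim is inherited from the radially decreasing case of Theorem \ref{Trudinger_GN2_hyper_thm}, since a radial counterexample to a smaller power lives in $W_{0}^{1,n}(\mathbb{H}^{n})$; your cautionary remark about how to read this sharpness (as a statement about reaching the critical threshold, not about fixed subcritical $\alpha$) is apt and consistent with the paper's brief treatment.
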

\begin{proof}[Proof of Theorem \ref{Trudinger_GN2_hyper_analog_thm}] By Theorem \ref{GN_hyper_thm} with $\beta_{1}=\beta_{2}=\beta$, hence $\delta=0$ by \eqref{delta}, we obtain
\begin{multline*}
\left\|\frac{f}{\rho^{\frac{\beta}{q}}(1+|f|)^{\frac{n'}{q}}
}\right\|_{L^{q}(\mathbb{H}^{n})}\leq
\left\|\frac{f}{\rho^{\frac{\beta}{q}}}\right\|_{L^{q}(\mathbb{H}^{n})}\leq B_{3}q^{1-1/n}\|\nabla_{g}
f\|_{L^{n}(\mathbb{H}^{n})}^{1-n/q}
\left\|\frac{f}{\rho^{\frac{\beta}{n}}}\right\|_{L^{n}(\mathbb{H}^{n})}^{n/q},
\end{multline*}
where $B_{3}$ is given in Theorem \ref{GN_hyper_thm}.
Then, using this and $\|\nabla_{g} f\|_{L^{n}(\mathbb{H}^{n})}\leq1$, one gets
\begin{multline}\label{GN2_hyper10_trud}
\int_{\mathbb{H}^{n}}\frac{1}{\rho^{\beta}(1+|f(x)|)^{n'}}\left(\exp(\alpha|f(x)|^{n'})-\sum_{k=0}^{n-2}
\frac{(\alpha|f(x)|^{n'})^{k}}{k!}\right)dV_{g}\\ \leq
\sum_{n'k\geq n,\;k\in\mathbb{N}}\frac{(\alpha
n'kB_{3}^{n'})^{k}}{k!}\int_{\mathbb{H}^{n}}
\frac{|f(x)|^{n}}{\rho^{\beta}}dV_{g}.
\end{multline}
The series in the right hand side of \eqref{GN2_hyper10_trud} converges when $0\leq\alpha<1/(n'eB_{3}^{n'})$. Thus, we have obtained \eqref{Trudinger_GN2_hyper1_analog} with $0\leq\alpha<1/(n'eB_{3}^{n'})$. Since $B_{3}=(\alpha_{\beta}n'e)^{-1/n'}$ by Remark \ref{rem_B3}, then we have \eqref{Trudinger_GN2_hyper1_analog} for $0\leq\alpha<\alpha_{\beta}$.

The sharpness of the power $n/(n-1)$ in the denominator is obtained by Theorem \ref{Trudinger_GN2_hyper_thm}, since this constant is sharp for radially decreasing functions in \eqref{Trudinger_GN2_hyper1_analog}.
\end{proof}
Now we show that \eqref{Trudinger_GN2_hyper1_analog} is equivalent to the following Caffarelli-Kohn-Nirenberg type inequalities.
\begin{thm}\label{GN3_hyper_thm}
Let $\mathbb{H}^{n}\;(n\geq2)$ be the $n$-dimensional hyperbolic space and let $0\leq\beta<n$. Then for any $n\leq q<\infty$ there exists a positive constant $C_{5}=C_{5}(n,\beta,q)$ such that
\begin{equation}\label{GN3_hyper}
\left\|\frac{f}{\rho^{\frac{\beta}{q}}(1+|f|)^{\frac{n'}{q}}
}\right\|_{L^{q}(\mathbb{H}^{n})}\leq C_{5} q^{1-1/n}\|\nabla_{g} f\|_{L^{n}(\mathbb{H}^{n})}^{1-n/q}
\left\|\frac{f}{\rho^{\frac{\beta}{n}}}\right\|_{L^{n}(\mathbb{H}^{n})}^{n/q}
\end{equation}
holds for all functions $f\in W_{0}^{1,n}(\mathbb{H}^{n})$. Moreover, we have
\begin{equation}\label{equiv_identity_GN3_hyper}
\frac{1}{\widetilde{\alpha}_{\beta} n'e}=A_{5}^{n'}=B_{5}^{n'},
\end{equation}
where
\begin{multline*}
\widetilde{\alpha}_{\beta}=\sup\{\alpha>0; \exists \widetilde{C_{5}}=\widetilde{C_{5}}(\beta, n, \alpha):\eqref{Trudinger_GN2_hyper1_analog}\\ \textrm{ holds for all functions} \\ f\in
W_{0}^{1,n}(\mathbb{H}^{n})
\textrm{ with } \|\nabla_{g} f\|_{L^{n}(\mathbb{H}^{n})}\leq1\},
\end{multline*}
\begin{equation*}
\begin{split}
A_{5}=\inf\{C_{5}>0; \exists r=r(n,&\beta,C_{5}) \textrm{ with } r\geq n:\\& \eqref{GN3_hyper}\textrm{ holds }\forall f\in
W_{0}^{1,n}(\mathbb{H}^{n}),
\forall q
\textrm{ with } r\leq q<\infty\},
\end{split}
\end{equation*}
\begin{equation}\label{alphaDF_GN3}
B_{5}=\limsup_{q\rightarrow \infty}\sup_{f\in W^{1,n}(\mathbb{H}^{n})\backslash\{0\}}
\frac{\left\|\frac{f}{\rho^{\frac{\beta}{q}}}\right\|_{L^{q}(\mathbb{H}^{n})}}{q^{1-1/n}\|\nabla_{g}
f\|_{L^{n}(\mathbb{H}^{n})}^{1-n/q}
\left\|\frac{f}{\rho^{\frac{\beta}{n}}}\right\|_{L^{n}(\mathbb{H}^{n})}^{n/q}}.
\end{equation}
The weighted Trudinger-Moser inequalities \eqref{Trudinger_GN2_hyper1_analog} are equivalent to the Caffarelli-Kohn-Nirenberg type inequalities \eqref{GN3_hyper} with relation \eqref{equiv_identity_GN3_hyper}.
\end{thm}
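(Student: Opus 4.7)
My plan mirrors the proofs of Theorems~\ref{Hardy_hyper_thm} and~\ref{GN_hyper_thm}. Since $B_5 \leq A_5$ is immediate from the definitions, it will suffice to establish the two implications \eqref{Trudinger_GN2_hyper1_analog}~$\Rightarrow$~\eqref{GN3_hyper} with $\widetilde{\alpha}_\beta \leq (n'eA_5^{n'})^{-1}$ and \eqref{GN3_hyper}~$\Rightarrow$~\eqref{Trudinger_GN2_hyper1_analog} with $1/\widetilde{\alpha}_\beta \leq n'eB_5^{n'}$; sandwiching these with $B_5 \leq A_5$ will then pin down \eqref{equiv_identity_GN3_hyper}.

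For the easier direction \eqref{GN3_hyper}~$\Rightarrow$~\eqref{Trudinger_GN2_hyper1_analog}, I would fix $f \in W_0^{1,n}(\mathbb{H}^n)$ with $\|\nabla_g f\|_{L^n(\mathbb{H}^n)} \leq 1$ and apply \eqref{GN3_hyper} at $q = kn'$ for each $k \geq n-1$. Raising to the $q$-th power, using the identity $(kn')^{kn'(1-1/n)} = (kn')^k$ together with $\|\nabla_g f\|^{kn'-n}_{L^n(\mathbb{H}^n)} \leq 1$, this gives
\begin{equation*}
\int_{\mathbb{H}^n} \frac{|f(x)|^{kn'}}{\rho^\beta (1+|f(x)|)^{n'}}\, dV_g \leq C_5^{kn'}(kn')^k \int_{\mathbb{H}^n} \frac{|f(x)|^n}{\rho^\beta}\, dV_g.
\end{equation*}
Multiplying by $\alpha^k/k!$ and summing over $k \geq n-1$ will recover the left-hand side of \eqref{Trudinger_GN2_hyper1_analog}, while the right-hand side becomes the numerical series $\sum_{k\geq n-1}(\alpha n'k C_5^{n'})^k/k!$ times $\int_{\mathbb{H}^n} |f|^n/\rho^\beta\, dV_g$. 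Stirling's formula will show convergence precisely when $\alpha n'e C_5^{n'} < 1$; since this must hold for every $C_5 \geq B_5$, I conclude $\widetilde{\alpha}_\beta \geq 1/(n'eB_5^{n'})$.

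For the reverse direction \eqref{Trudinger_GN2_hyper1_analog}~$\Rightarrow$~\eqref{GN3_hyper}, I fix $0 < \alpha < \widetilde{\alpha}_\beta$. For $f$ with $\|\nabla_g f\|_{L^n(\mathbb{H}^n)} \leq 1$, extracting the $k$-th Taylor term from \eqref{Trudinger_GN2_hyper1_analog} produces the $L^{kn'}$-type estimate
\begin{equation*}
\left\|\frac{f}{\rho^{\beta/(kn')}(1+|f|)^{n'/(kn')}}\right\|_{L^{kn'}(\mathbb{H}^n)} \leq (\widetilde{C_5}\,k!)^{1/(kn')}\alpha^{-1/n'}\left\|\frac{f}{\rho^{\beta/n}}\right\|^{n/(kn')}_{L^n(\mathbb{H}^n)}.
\end{equation*}
For an arbitrary exponent $q \geq n$ I will then choose $k \geq n-1$ with $kn' \leq q < (k+1)n'$ and apply H\"older's inequality at the level of integrands, exactly as in \eqref{Hardy_hyper41}, to interpolate between the $L^{kn'}$ and $L^{(k+1)n'}$ norms; crucially, the weight $(1+|f|)^{-n'}$ sits in the denominator of both integrals and splits via H\"older just like $\rho^{-\beta}$ does. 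Combining with $(k+1)! \leq \Gamma(q/n'+2)$ (valid for $q \geq kn'$) and the Stirling asymptotic \eqref{Gamma1}, this will yield, for any $\varepsilon \in (0,\widetilde{\alpha}_\beta)$ and $\delta > 0$,
\begin{equation*}
\left\|\frac{f}{\rho^{\beta/q}(1+|f|)^{n'/q}}\right\|_{L^q(\mathbb{H}^n)} \leq \bigl((n'e(\widetilde{\alpha}_\beta-\varepsilon))^{-1/n'}+\delta\bigr)\,q^{1-1/n}\left\|\frac{f}{\rho^{\beta/n}}\right\|^{n/q}_{L^n(\mathbb{H}^n)}
\end{equation*}
for all sufficiently large $q$ and all $f$ with $\|\nabla_g f\|_{L^n(\mathbb{H}^n)} \leq 1$. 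Promoting this to the form of \eqref{GN3_hyper} for arbitrary $f$ is handled by splitting into cases: for $\|\nabla_g f\|_{L^n(\mathbb{H}^n)} \geq 1$ I rescale $\widetilde f := f/\|\nabla_g f\|_{L^n(\mathbb{H}^n)}$ and use the monotonicity $(1+|\widetilde f|)^{n'} \leq (1+|f|)^{n'}$ to reintroduce the factor $\|\nabla_g f\|^{1-n/q}_{L^n(\mathbb{H}^n)}$, while for $\|\nabla_g f\|_{L^n(\mathbb{H}^n)} \leq 1$ the required factor can be inserted using $\|\nabla_g f\|^{1-n/q}_{L^n(\mathbb{H}^n)} \leq 1$ together with the complementary bound coming from \eqref{GN_hyper1} of Theorem~\ref{GN_hyper_thm} applied with $\beta_1=\beta_2=\beta$ and $\delta = 0$. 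Letting $\varepsilon, \delta \to 0$ then gives $A_5 \leq (n'e\widetilde{\alpha}_\beta)^{-1/n'}$.

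The main technical obstacle is the non-homogeneity of \eqref{GN3_hyper} caused by the $(1+|f|)^{n'/q}$ factor, which obstructs the direct rescaling reduction to $\|\nabla_g f\|_{L^n(\mathbb{H}^n)} = 1$ used in the earlier proofs. I will manage this by proving the quantitative asymptotic estimate first in the normalized regime $\|\nabla_g f\|_{L^n(\mathbb{H}^n)} \leq 1$, where the $k$-th Taylor term of \eqref{Trudinger_GN2_hyper1_analog} directly controls the weighted $L^{kn'}$ norm appearing in \eqref{GN3_hyper}, and then patching the complementary regime via the monotonicity of $(1+|\cdot|)^{n'}$ and Theorem~\ref{GN_hyper_thm}, as noted already in the proof of Theorem~\ref{Trudinger_GN2_hyper_analog_thm}.
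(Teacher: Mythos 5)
Your overall architecture is the same as the paper's: termwise extraction of the Taylor coefficients, H\"older interpolation between the $L^{n'k}$ and $L^{n'(k+1)}$ levels with the weight split along with $\rho^{-\beta}$, the bound $(k+1)!\leq\Gamma(q/n'+2)$ with Stirling, and, for the converse, summation of \eqref{GN3_hyper} at $q=kn'$ against $\alpha^{k}/k!$. Your direction \eqref{GN3_hyper}$\Rightarrow$\eqref{Trudinger_GN2_hyper1_analog} with $1/\widetilde{\alpha}_{\beta}\leq n'eB_{5}^{n'}$ matches the paper. The divergence is in the forward direction: the paper replaces $f$ by $f/\|\nabla_{g}f\|_{L^{n}(\mathbb{H}^{n})}$ inside \eqref{Trudinger_GN2_hyper1_analog} and carries the scale-invariant weight $\left(1+|f|/\|\nabla_{g}f\|_{L^{n}(\mathbb{H}^{n})}\right)^{n'/q}$ through the whole H\"older/Stirling argument, so that \eqref{GN2_hyper8} holds for \emph{all} $f$ with the constant $(n'e(\widetilde{\alpha}_{\beta}-\varepsilon))^{-1/n'}+\delta$, passing to the weight $(1+|f|)^{n'/q}$ only at the very end; your monotonicity argument in the regime $\|\nabla_{g}f\|_{L^{n}(\mathbb{H}^{n})}\geq1$ is precisely the rigorous content of that final passage in that regime. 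You, instead, prove the sharp asymptotic estimate only for normalized $f$ and patch the complementary regime.

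The patch is where there is a genuine gap, and it affects exactly the sharp-constant claim. For $\|\nabla_{g}f\|_{L^{n}(\mathbb{H}^{n})}<1$ your bound is imported from Theorem \ref{GN_hyper_thm} with $\beta_{1}=\beta_{2}=\beta$, whose asymptotically best constant is $B_{3}=(\alpha_{\beta}n'e)^{-1/n'}$, not $(\widetilde{\alpha}_{\beta}n'e)^{-1/n'}$. Since a priori $\widetilde{\alpha}_{\beta}\geq\alpha_{\beta}$ (Theorem \ref{Trudinger_GN2_hyper_analog_thm}), with equality recorded only for $\beta=0$ (Remark \ref{rem_LT16}), your two regimes together yield only $A_{5}\leq(\alpha_{\beta}n'e)^{-1/n'}$, i.e. $\alpha_{\beta}\leq(en'A_{5}^{n'})^{-1}$, which is weaker than the required $\widetilde{\alpha}_{\beta}\leq(en'A_{5}^{n'})^{-1}$. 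The resulting chain $1/(n'e\widetilde{\alpha}_{\beta})\leq B_{5}^{n'}\leq A_{5}^{n'}\leq 1/(n'e\alpha_{\beta})$ does not collapse to \eqref{equiv_identity_GN3_hyper} unless one knows separately that $\widetilde{\alpha}_{\beta}=\alpha_{\beta}$, which your argument does not establish and which is essentially part of what the identity encodes for $\beta>0$. The existence part of \eqref{GN3_hyper} is unaffected (any constant suffices in the small-gradient regime), but to recover \eqref{equiv_identity_GN3_hyper} you should, as the paper does, keep the weight in the form $\left(1+|f|/\|\nabla_{g}f\|_{L^{n}(\mathbb{H}^{n})}\right)^{n'/q}$ throughout the interpolation so that the constant tied to $\widetilde{\alpha}_{\beta}$ is available for every $f$, rather than splitting into cases whose second half uses a tool with a different (possibly larger) sharp constant.
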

\begin{rem}\label{rem_LT16_2} An analogue of Remark \ref{rem_B1} holds, in particular, $B_{5}$ is asymptotically sharp for \eqref{GN3_hyper}. When $\beta=0$, by Remark \ref{rem_LT16} we have $\widetilde{\alpha}_{\beta}=\alpha_{\beta}=n\omega_{n-1}^{1/(n-1)}$, which gives explicit expression for $B_{5}=(n\omega_{n-1}^{1/(n-1)}n'e)^{-1/n'}$.
\end{rem}
\begin{proof}[Proof of Theorem \ref{GN3_hyper_thm}] Since $B_{5}\leq A_{5}$, then, we need to show the following two cases: \eqref{Trudinger_GN2_hyper1_analog}$\Rightarrow$\eqref{GN3_hyper} with $\widetilde{\alpha}_{\beta}\leq(en'A_{5}^{n'})^{-1}$ and
\eqref{GN3_hyper}$\Rightarrow$\eqref{Trudinger_GN2_hyper1_analog} with $1/\widetilde{\alpha}_{\beta}\leq n'eB_{5}^{n'}$. So, we start to show \eqref{Trudinger_GN2_hyper1_analog}$\Rightarrow$\eqref{GN3_hyper} with $\widetilde{\alpha}_{\beta}\leq(en'A_{5}^{n'})^{-1}$. As in the proof of Theorem \ref{GN_hyper_thm}, it suffices to show \eqref{GN3_hyper} for $\|\nabla_{g} f\|_{L^{n}(\mathbb{H}^{n})}\neq0$. Then, we replace $f$ by $f/\|\nabla_{g} f\|_{L^{n}(\mathbb{H}^{n})}$ in
\eqref{Trudinger_GN2_hyper1_analog} with $0<\alpha<\widetilde{\alpha}_{\beta}$ to get
\begin{equation}\label{GN2_hyper3_0}
\begin{split}
\int_{\mathbb{H}^{n}}\frac{1}{\rho^{\beta}\left(1+\frac{|f(x)|}{\|\nabla_{g}
f\|_{L^{n}(\mathbb{H}^{n})}}\right)^{n'}}\sum_{k=n-1}^{\infty}&
\frac{\alpha^{k}|f(x)|^{kn'}}{k!}dV_{g}\\&\leq \widetilde{C_{5}}
\|\nabla_{g} f\|_{L^{n}(\mathbb{H}^{n})}^{kn'-n}\left(\int_{\mathbb{H}^{n}}
\frac{|f(x)|^{n}}{\rho^{\beta}}dV_{g}\right).
\end{split}
\end{equation}
From this, we note that for any $0<\varepsilon<\widetilde{\alpha}_{\beta}$ there is $C_{\varepsilon}$ such that
\begin{equation}\label{GN2_hyper3}
\begin{split}
\int_{\mathbb{H}^{n}}\frac{1}{\rho^{\beta}\left(1+\frac{|f(x)|}{\|\nabla_{g}
f\|_{L^{n}(\mathbb{H}^{n})}}\right)^{n'}}\sum_{k=n-1}^{\infty}&
\frac{(\widetilde{\alpha}_{\beta}-\varepsilon)^{k}|f(x)|^{kn'}}{k!}dV_{g}\\&\leq C_{\varepsilon}
\|\nabla_{g} f\|_{L^{n}(\mathbb{H}^{n})}^{kn'-n}\left(\int_{\mathbb{H}^{n}}
\frac{|f(x)|^{n}}{\rho^{\beta}}dV_{g}\right).
\end{split}
\end{equation}
In particular, it implies that
\begin{multline}\label{GN2_hyper4}
\left\|\frac{f}{\rho^{\frac{\beta}{kn'}}\left(1+\frac{|f|}{\|\nabla_{g}
f\|_{L^{n}(\mathbb{H}^{n})}}\right)^{1/k}}\right\|_{L^{kn'}(\mathbb{H}^{n})}\\ \leq (C_{\varepsilon}k!)^{1/(kn')}
(\widetilde{\alpha}_{\beta}-\varepsilon)^{-1/n'}\|\nabla_{g}
f\|_{L^{n}(\mathbb{H}^{n})}^{1-(n-1)/k}\left\|\frac{f}{\rho^{\frac{\beta}{n}}}\right\|_{L^{n}(\mathbb{H}^{n})}^{(n-1)/k}
\end{multline}
holds for all $k\geq n-1$. Moreover, for any $q\geq n$, there exists an integer $k\geq n-1$ satisfying $n'k\leq q <n'(k+1)$. Then, using H\"{o}lder's inequality for $\frac{\theta q}{n'k}+\frac{(1-\theta)
q}{n'(k+1)}=1$ with $0<\theta\leq1$ we calculate
\begin{equation*}
\begin{split}
\int_{\mathbb{H}^{n}}&\frac{|f(x)|^{q}}{\rho^{\beta}\left(1+\frac{|f(x)|}{\|\nabla_{g} f\|_{L^{n}(\mathbb{H}^{n})}}\right)^{n'}}dV_{g}\\&
=\int_{\mathbb{H}^{n}}\frac{|f(x)|^{\theta q}}{\rho^{\frac{\beta\theta
q}{n'k}}\left(1+\frac{|f(x)|}{\|\nabla_{g} f\|_{L^{n}(\mathbb{H}^{n})}}\right)^{\frac{\theta
q}{k}}}\cdot\frac{|f(x)|^{(1-\theta)q}}{\rho^{\frac{\beta(1-\theta) q}{n'(k+1)}}\left(1+\frac{|f(x)|}{\|\nabla_{g}
f\|_{L^{n}(\mathbb{H}^{n})}}\right)^{\frac{(1-\theta) q}{k+1}}}dV_{g}\\&
\leq \left(\int_{\mathbb{H}^{n}}\frac{|f(x)|^{n'k}}{\rho^{\beta}\left(1+\frac{|f(x)|}{\|\nabla_{g}
f\|_{L^{n}(\mathbb{H}^{n})}}\right)^{n'}}dV_{g}\right)^{\frac{\theta q}{n'k}}
\left(\int_{\mathbb{H}^{n}}\frac{|f(x)|^{n'(k+1)}}{\rho^{\beta}\left(1+\frac{|f(x)|}{\|\nabla_{g}
f\|_{L^{n}(\mathbb{H}^{n})}}\right)^{n'}}dV_{g}\right)^{\frac{(1-\theta) q}{n'(k+1)}}\\&
=\left\|\frac{f}{\rho^{\frac{\beta}{n'k}}\left(1+\frac{|f|}{\|\nabla_{g}
f\|_{L^{n}(\mathbb{H}^{n})}}\right)^{1/k}}\right\|_{L^{n'k}(\mathbb{H}^{n})}^{\theta q}
\left\|\frac{f}{\rho^{\frac{\beta}{n'(k+1)}}\left(1+\frac{|f|}{\|\nabla_{g}
f\|_{L^{n}(\mathbb{H}^{n})}}\right)^{1/(k+1)}}\right\|_{L^{n'(k+1)}(\mathbb{H}^{n})}^{(1-\theta)q},
\end{split}
\end{equation*}
that is,
\begin{equation*}
\begin{split}
&\left\|\frac{f}{\rho^{\frac{\beta}{q}}\left(1+\frac{|f|}{\|\nabla_{g} f\|_{L^{n}(\mathbb{H}^{n})}}\right)^{\frac{n'}{q}}
}\right\|_{L^{q}(\mathbb{H}^{n})}
\\& \leq\left\|\frac{f}{\rho^{\frac{\beta}{n'k}}\left(1+\frac{|f|}{\|\nabla_{g}
f\|_{L^{n}(\mathbb{H}^{n})}}\right)^{1/k}}\right\|_{L^{n'k}(\mathbb{H}^{n})}^{\theta}
\left\|\frac{f}{\rho^{\frac{\beta}{n'(k+1)}}\left(1+\frac{|f|}{\|\nabla_{g}
f\|_{L^{n}(\mathbb{H}^{n})}}\right)^{1/(k+1)}}\right\|_{L^{n'(k+1)}(\mathbb{H}^{n})}^{1-\theta}.
\end{split}
\end{equation*}
Combining this with \eqref{GN2_hyper4}, we obtain
\begin{multline}\label{GN2_hyper6}
\left\|\frac{f}{\rho^{\frac{\beta}{q}}\left(1+\frac{|f|}{\|\nabla_{g} f\|_{L^{n}(\mathbb{H}^{n})}}\right)^{\frac{n'}{q}}
}\right\|_{L^{q}(\mathbb{H}^{n})}\\ \leq
C_{\varepsilon}^{\frac{1}{q}}(\widetilde{\alpha}_{\beta}-\varepsilon)^{-\frac{1}{n'}}
((k+1)!)^{\frac{1}{q}}\|\nabla_{g} f\|_{L^{n}(\mathbb{H}^{n})}^{1-n/q}
\left\|\frac{f}{\rho^{\frac{\beta}{n}}}\right\|_{L^{n}(\mathbb{H}^{n})}^{n/q}.
\end{multline}
Since $q\geq n'k$ we get $(k+1)!\leq \Gamma(q/n'+2)$, then \eqref{GN2_hyper6} gives that
\begin{multline}\label{GN2_hyper7}
\left\|\frac{f}{\rho^{\frac{\beta}{q}}\left(1+\frac{|f|}{\|\nabla_{g} f\|_{L^{n}(\mathbb{H}^{n})}}\right)^{\frac{n'}{q}}
}\right\|_{L^{q}(\mathbb{H}^{n})}\\ \leq
C_{\varepsilon}^{\frac{1}{q}}(\widetilde{\alpha}_{\beta}-\varepsilon)^{-\frac{1}{n'}}
(\Gamma(q/n'+2))^{\frac{1}{q}}\|\nabla_{g}f\|_{L^{n}(\mathbb{H}^{n})}^{1-n/q}
\left\|\frac{f}{\rho^{\frac{\beta}{n}}}\right\|_{L^{n}(\mathbb{H}^{n})}^{n/q}
\end{multline}
for any $q\geq n$ and for all $f\in W_{0}^{1,n}(\mathbb{H}^{n})$, which gives \eqref{GN3_hyper} after replacing $f$ by $\|\nabla_{g} f\|_{L^{n}(\mathbb{H}^{n})}f$. For $q\rightarrow+\infty$, using \eqref{Gamma1} in \eqref{GN2_hyper7} we see that for any $\delta>0$ there is $r\geq n$ such that
\begin{multline}\label{GN2_hyper8}
\left\|\frac{f}{\rho^{\frac{\beta}{q}}\left(1+\frac{|f|}{\|\nabla_{g} f\|_{L^{n}(\mathbb{H}^{n})}}\right)^{\frac{n'}{q}}
}\right\|_{L^{q}(\mathbb{H}^{n})}\\ \leq
((n'e(\widetilde{\alpha}_{\beta}-\varepsilon))^{-\frac{1}{n'}}+\delta)
q^{\frac{1}{n'}}\|\nabla f\|_{L^{n}(\mathbb{H}^{n})}^{1-n/q}
\left\|\frac{f}{\rho^{\frac{\beta}{n}}}\right\|_{L^{n}(\mathbb{H}^{n})}^{n/q}
\end{multline}
holds for all $f\in W_{0}^{1,n}(\mathbb{H}^{n})$ and all $q$ with $r\leq q<\infty$. Here, replacing $f$ by $\|\nabla_{g} f\|_{L^{n}(\mathbb{H}^{n})}f$ we obtain \eqref{GN3_hyper}. We see that $A_{5}\leq (n'e(\widetilde{\alpha}_{\beta}-\varepsilon))^{-1/n'}+\delta$, then by the arbitrariness of $\varepsilon$ and $\delta$ we obtain $\widetilde{\alpha}_{\beta}\leq(en'A_{5}^{n'})^{-1}$.

It remains to show that \eqref{GN3_hyper}$\Rightarrow$\eqref{Trudinger_GN2_hyper1_analog} with $1/\widetilde{\alpha}_{\beta}\leq n'eB_{5}^{n'}$. By
\eqref{GN3_hyper}, for any $q$ with $n\leq q<\infty$ there exists $C_{5}=C_{5}(n,\beta,q)>0$ such that
\begin{equation}\label{GN2_hyper9}
\left\|\frac{f}{\rho^{\frac{\beta}{q}}(1+|f|)^{\frac{n'}{q}}
}\right\|_{L^{q}(\mathbb{H}^{n})}\leq
C_{5}q^{1-1/n}\|\nabla_{g}f\|_{L^{n}(\mathbb{H}^{n})}^{1-n/q}
\left\|\frac{f}{\rho^{\frac{\beta}{n}}}\right\|_{L^{n}(\mathbb{H}^{n})}^{n/q}
\end{equation}
holds for all $f\in W_{0}^{1,n}(\mathbb{H}^{n})$.
By this and taking into account $\|\nabla_{g} f\|_{L^{n}(\mathbb{H}^{n})}\leq1$, one gets
\begin{multline}\label{GN2_hyper10}
\int_{\mathbb{H}^{n}}\frac{1}{\rho^{\beta}(1+|f(x)|)^{n'}}\left(\exp(\alpha|f(x)|^{n'})-\sum_{k=0}^{n-2}
\frac{(\alpha|f(x)|^{n'})^{k}}{k!}\right)dV_{g}\\ \leq
\sum_{n'k\geq n,\;k\in\mathbb{N}}\frac{(\alpha
n'kC_{5}^{n'})^{k}}{k!}\int_{\mathbb{H}^{n}}
\frac{|f(x)|^{n}}{\rho^{\beta}}dV_{g}.
\end{multline}
The series in the right hand side of \eqref{GN2_hyper10} converges when $0\leq\alpha<1/(n'eC_{5}^{n'})$. Thus, we have obtained \eqref{Trudinger_GN2_hyper1_analog} with $0\leq\alpha<1/(n'eC_{5}^{n'})$. Hence
$\widetilde{\alpha}_{\beta}\geq 1/(n'eC_{5}^{n'})$ for all $C_{5}\geq B_{5}$, which gives $\widetilde{\alpha}_{\beta}\geq 1/(n'eB_{5})^{n'}$.

Thus, we have completed the proof of Theorem \ref{GN3_hyper_thm}.
\end{proof}

\end{document}